\def\Diff{\operatorname{Diff}}
\def\diff{\operatorname{diff}}
\def\Homeo{\operatorname{Homeo}}
\def\Sp{\operatorname{Sp}}
     \def\section{\@startsection{section}{1}%
     \z@{.7\linespacing\@plus\linespacing}{.5\linespacing}%
     {\bfseries
     \centering
     }}
     \def\@secnumfont{\bfseries}
\newtheorem{theorem}{Theorem}[section]
\newtheorem{proposition}[theorem]{Proposition}
\theoremstyle{definition}
\newtheorem{definition}[theorem]{Definition}
\theoremstyle{definition}
\newtheorem{notation}[theorem]{Notation}
\theoremstyle{remark}
\newtheorem{remark}[theorem]{Remark}
\newtheorem*{acknowledgement}{Acknowledgement}
\numberwithin{equation}{section} \setcounter{page}{1}
\begin{document}

\title[A Brownian motion on $\Diff(S^1)$]
{A Brownian motion on $\Diff(S^1)$}


\author{Mang Wu{$^{\ast}$}}
\thanks{{$^{\ast}$} This research was partially supported by NSF Grant
DMS-0706784.} \email{mwu@math.uconn.edu}

\subjclass[2000] {Primary  60H07; Secondary  58J65, 60J65}

\keywords{$\Diff(S^{1})$, Brownian motion, stochastic differential
equations}

\begin{abstract}
Let $\Diff(S^1)$ be the group of orientation preserving $C^\infty$
diffeomorphisms of $S^1$. In \cite{Malliavin1999} P. Malliavin and
then in \cite{Fang2002} S. Fang constructed a canonical Brownian
motion associated with the $H^{3/2}$ metric on the Lie algebra
$\diff(S^1)$. The canonical Brownian motion they constructed lives
in the group $\Homeo\left( S^{1}\right)$ of H\"olderian
homeomorphisms of $S^1$, which is larger than the group
$\Diff(S^1)$. In this paper, we present another way to construct a
Brownian motion that lives in the group $\Diff(S^1)$, rather than in
the larger group $\Homeo\left( S^{1}\right)$.
\end{abstract}

\maketitle

\section{Introduction}\label{section1}

Let $\Diff(S^1)$ be the group of orientation preserving
$C^\infty$-diffeomorphisms of $S^1$, and let $\diff(S^1)$ be the
space of $C^\infty$-vector fields on $S^1$. The space $\diff(S^1)$
can be identified with the space of $C^\infty$-functions on $S^1$.
Therefore, $\diff(S^1)$ carries a natural Fr\'echet space structure.
In addition $\diff(S^1)$ is an infinite dimensional Lie algebra: for
any $f,g\in\diff(S^1)$, the Lie bracket is given by $[f,g]=f'g-fg'$.
Thus the group $\Diff(S^1)$ associated with the Lie algebra
$\diff(S^1)$ becomes an infinite dimensional Fr\'echet Lie group
\cite{Milnor}. Our goal in this paper is to construct a Brownian
motion in the group $\Diff(S^1)$.

In general, to construct a Brownian motion in a Lie group, one might
solve a Stratonovich stochastic differential equation (SDE) on such
a group. The method is best illustrated for a finite dimensional
compact Lie group.

Let $G$ be a finite dimensional compact Lie group. Denote by
$\mathfrak{g}$ the Lie algebra of $G$ identified with the tangent
space $T_eG$ to the group $G$ at the identity element $e\in G$. Let
$L_g:G\to G$ be the left translation of $G$ by an element $g\in G$,
and let $(L_g)_\ast:\mathfrak{g}\to T_gG$ be the differential of
$L_g$. If we choose a metric on $\mathfrak{g}$ and let $W_t$ be the
standard Brownian motion on $\mathfrak{g}$ corresponding to this
metric, we can develop the Brownian motion $W_t$ onto $G$ by solving
a Stratonovich stochastic differential equation
\begin{equation}\label{eq1.1}
\delta \widetilde{X}_t = (L_{\widetilde{X}_t})_\ast\delta W_t
\end{equation}
where $\delta$ stands for the Stratonovich differential. The
solution $\widetilde{X}_t$ is a Markov process on $G$ with the
generator being the Laplace operator on $G$. We call
$\widetilde{X}_t$ the Brownian motion on the group $G$ \cite{Hsu,
Kunita}.

In case when $G$ is an infinite dimensional Hilbert Lie group, one
can solve Equation \eqref{eq1.1} by using the theory of stochastic
differential equations in Hilbert spaces as developed by G. DaPrato
and J. Zabczyk in \cite{DaPrato}. Using this method, M. Gordina
\cite{Gordina1, Gordina2, Gordina3} and M. Wu \cite{Wu} constructed
a Brownian motion in several Hilbert-Schmidt groups. The
construction relied on the fact that these Hilbert-Schmidt groups
are Hilbert Lie groups.

In the present case, we would like to replace $G$ by $\Diff(S^1)$
and $\mathfrak{g}$ by $\diff(S^1)$ and solve Equation \eqref{eq1.1}
correspondingly. But because the group $\Diff(S^1)$ is a Fr\'echet
Lie group, which is not a Hilbert Lie group, Equation \eqref{eq1.1}
does not even make sense as it stands. First, we need to interpret
the Brownian motion $W_t$ in the Fr\'echet space $\diff(S^1)$
appropriately. Second, we are lacking a well developed stochastic
differential equation theory in Fr\'echet spaces to make sense of
Equation \eqref{eq1.1}.

In 1999, P. Malliavin \cite{Malliavin1999} first constructed a
canonical Brownian motion on $\Homeo(S^1)$, the group of H\"olderian
homeomorphisms of $S^1$. In 2002, S. Fang \cite{Fang2002} gave a
detailed construction of this canonical Brownian motion on the group
$\Homeo(S^1)$. Their constructions were essentially by interpreting
and solving the same Equation \eqref{eq1.1} on the group
$\Diff(S^1)$.

To define the Brownian motion $W_t$ in Equation \eqref{eq1.1},
P.~Malliavin and S.~Fang chose the $H^{3/2}$ metric of the Lie
algebra $\diff(S^1)$. Basically, this metric uses the set
\begin{equation}\label{eq1.2}
\{n^{-3/2}\cos(n\theta), m^{-3/2}\sin(m\theta)| m,n=1,2,3,\cdots\},
\end{equation}
which is a subset of the Lie algebra $\diff(S^1)$, as an orthonormal
basis to form a Hilbert space $H^{3/2}$. Then they defined $W_t$ to
be the cylindrical Brownian motion in $H^{3/2}$ with the covariance
operator being the identity operator on $H^{3/2}$. But since the
coefficients $n^{-3/2}$ and $m^{-3/2}$ do not decrease rapidly
enough, the Hilbert space $H^{3/2}$ is not contained in the Lie
algebra $\diff(S^1)$. Therefore, the Brownian motion $W_t$ they
defined on $H^{3/2}$ does not live in $\diff(S^1)$ either. This is
the essential reason why the canonical Brownian motion they
constructed lives in a larger group $\Homeo(S^1)$, but not in the
group $\Diff(S^1)$.

To interpret and solve Equation \eqref{eq1.1}, S.~Fang treated it as
a family of stochastic differential equations on $S^1$: for each
$\theta\in S^1$, S.~Fang considered the equation
\begin{equation}\label{eq1.3}
\delta \widetilde{X}_{\theta,t}
=(L_{\widetilde{X}_{\theta,t}})_\ast \delta W_{\theta,t},
\end{equation}
which is a stochastic differential equation on $S^1$. By solving
Equation \eqref{eq1.3} for each $\theta\in S^1$,S.~Fang obtained a
family of solutions $\widetilde{X}_{\theta,t}$ parameterized by
$\theta$. Then he used a Kolmogorov type argument to show that the
family $\widetilde{X}_{\theta,t}$ is H\"olderian continuous in the
variable $\theta$. Using this method, he proved that for each $t\ge
0$, $\widetilde{X}_{\theta,t}$ is a H\"olderian homeomorphism of
$S^1$. Thus, he constructed the canonical Brownian motion on the
group $\Homeo(S^1)$. But this Kolmogorov type argument cannot be
pushed further to show that $\widetilde{X}_{\theta,t}$ is
differentiable in $\theta$. Therefore,S.~Fang's method does not seem
to be suitable to construct a Brownian motion that lives in the
group $\Diff(S^1)$, rather than in $\Homeo\left( S^{1}\right)$.

In the current paper, our goal is to construct a Brownian motion
that lives in the group $\Diff(S^1)$. To achieve this, we need
another way to interpret and solve Equation \eqref{eq1.1}.

First, instead of the $H^{3/2}$ metric that P.~Malliavin and S.~Fang
used, we choose a very ``strong'' metric on the Lie algebra
$\diff(S^1)$: let $\{\lambda(n)\}_{n=1}^\infty$ be a sequence of
rapidly decreasing positive numbers. We use the set
\begin{equation}\label{eq1.4}
\{\lambda(n)\cos(n\theta),\lambda(m)\sin(m\theta) | m,n=1,2,3,\cdots \},
\end{equation}
which is a subset of the Lie algebra $\diff(S^1)$, as an orthonormal
basis to form a Hilbert space $H_\lambda$. Then we define the
Brownian motion $W_t$ to be the cylindrical Brownian motion in
$H_\lambda$ with the covariance operator being the identity operator
on $H_\lambda$. Because the coefficients $\lambda(n)$ are rapidly
decreasing, the Hilbert space $H_\lambda$ is a \emph{subspace} of
the Lie algebra $\diff(S^1)$. Therefore, the Brownian motion $W_t$
lives in the Lie algebra $\diff(S^1)$, and the solution of Equation
\eqref{eq1.1} has a better chance to live in the group $\Diff(S^1)$.

Second, in contrast to Fang's method of interpreting Equation
\eqref{eq1.1} ``pointwise'' as a family of stochastic differential
equations on $S^1$, we interpreted it as a sequence of stochastic
differential equations on a sequence of ``Hilbert'' spaces. To do
this, we embed the group $\Diff(S^1)$ into an affine space
$\widetilde{\diff}(S^1)$ that is isomorphic to the Lie algebra
$\diff(S^1)$. Let $H^k$ be the $k$th Sobolev space over $S^1$. It is
a separable Hilbert space. Let $\widetilde{H}^k$ be the
corresponding affine space that is isomorphic to $H^k$. For the
precise definition of the space $\widetilde{\diff}(S^1)$ and
$\widetilde{H}^k$, see Section 2. It is well known that the space
$\diff(S^1)$ is the intersection of the Sobolev spaces $H^k$.
Similarly, $\widetilde{\diff}(S^1)$ is the intersection of the
affine spaces $\widetilde{H}^k$. Now we have the embedding
\begin{equation}\label{eq1.5}
\Diff(S^1)\subseteq \widetilde{\diff}(S^1) \subseteq \widetilde{H}^k,
\hspace{.2in}
k=1,2,3,\cdots
\end{equation}
Thus, we can interpret Equation \eqref{eq1.1} as a sequence of
stochastic differential equations on the sequence of affine spaces
$\{\widetilde{H}^k\}_{k=1}^\infty$ each of which is isomorphic to
the Hilbert space $H^k$. These stochastic differential equations can
be solved by DaPrato and Zabczyk's method \cite{DaPrato}.

In accordance with the notations used by DaPrato and Zabczyk in
\cite{DaPrato}, in the rest of this paper, we will denote the
operator $(L_{\widetilde{X}_t})_\ast$ in Equation \eqref{eq1.1} by
$\widetilde{\Phi}(\widetilde{X}_t)$. The operator $\widetilde{\Phi}$
will be discussed in detail in Section 2. After adding the initial
condition, we can now re-write Equation \eqref{eq1.1} as
\begin{equation}\label{eq1.6}
\delta \widetilde{X}_t=\widetilde{\Phi}(\widetilde{X}_t)\delta W_t,
\hspace{.2in}
\widetilde{X}_0=id
\end{equation}
where $id$ is the identity element in $\Diff(S^1)$.

Equation \eqref{eq1.6} is interpreted as a stochastic differential
equation in each ``Hilbert'' space $\widetilde{H}^k$. To use DaPrato
and Zabczyk's method to solve this equation, we need to establish
the Lipschitz condition of the operator $\widetilde{\Phi}$. In
Section 2, it turns out that the operator $\widetilde{\Phi}$ is
\emph{locally} Lipschitz. So the explosion time of the solution
needs to be discussed.

After solving Equation \eqref{eq1.6} in $\widetilde{H}^k$ for each $k$,
it is relatively easy to prove that the solution
lives in the affine space $\widetilde{\diff}(S^1)$ (Proposition \ref{intersection}).
By the embedding \eqref{eq1.5}, the group $\Diff(S^1)$ is a subset
of the affine space $\widetilde{\diff}(S^1)$.
We wish to push one step further to prove that the solution actually
lives in the group $\Diff(S^1)$.

In general, to prove a process lives in a group rather than in an ambient space,
one needs to construct an inverse process.
To construct the inverse process, usually one needs to solve another
stochastic differential equation -- the SDE for the inverse process \cite{Gordina1, Wu}.
In our case, we have derived the SDE for the inverse process:
\begin{equation}\label{eq1.7}
\delta\widetilde{Y}_t=\widetilde{\Psi}(\widetilde{Y}_t)\delta W_t
\end{equation}
where $\widetilde{\Psi}$ is an operator such that for $\tilde{g}\in\Diff(S^1)$
and $f\in\diff(S^1)$, $\widetilde{\Psi}(\tilde{g})f=D\tilde{g}\cdot f$,
where $D=d/d\theta$ and ``$\cdot$'' is the pointwise multiplication of two functions.
Because the operator $D$ causes loss of one degree of smoothness,
we cannot interpret Equation \eqref{eq1.7} in $\widetilde{H}^k$ as we did for
Equation \eqref{eq1.6}, and we were forced to give up this method.

But we managed to get around this problem.
We first observed that
an element $\tilde{f}\in\widetilde{\diff}(S^1)$ belongs to $\Diff(S^1)$
if and only if $\tilde{f}'(\theta)>0$ for all $\theta\in S^1$.
Based on this observation, we showed that the solution is contained in
the group $\Diff(S^1)$ up to a stopping time.
Then we can ``concatenate'' this small piece of solution with another
small piece of solution to make a new solution up to a longer stopping time.
The key idea is Proposition (\ref{Sasha}) and the following remark (Remark \ref{concatenate}).
Finally, we were able to prove the following theorem (Theorem \ref{MainTheorem}):
\begin{theorem}\label{theorem1.1}
There is a unique $\widetilde{H}^k$-valued solution with continuous sample paths
to Equation \eqref{eq1.6} for all $k=0,1,2,\cdots$.
Furthermore, the solution is non-explosive and lives in the group $\Diff(S^1)$.
\end{theorem}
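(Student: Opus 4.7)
The plan is to assemble the theorem from the ingredients prepared in Section 2 and the auxiliary propositions sketched in the introduction, in four stages.

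\textbf{Stage 1: Local solutions in each $\widetilde{H}^k$.} Section 2 shows that the coefficient $\widetilde{\Phi}$ is locally Lipschitz on every $\widetilde{H}^k$. I would therefore apply the DaPrato--Zabczyk existence and uniqueness theorem to the truncated SDE obtained by multiplying $\widetilde{\Phi}$ by a smooth cutoff supported in the ball of radius $n$ in $\widetilde{H}^k$. This yields, for each $k$ and $n$, a unique global mild solution $\widetilde{X}^{k,n}_t$ with continuous $\widetilde{H}^k$-valued sample paths. Letting $\tau_{k,n}$ be the first exit of $\widetilde{X}^{k,n}_t$ from the ball of radius $n$, uniqueness forces the truncated solutions to coincide on $[0,\tau_{k,n}]$, so patching produces a unique maximal solution $\widetilde{X}^k_t$ defined up to an explosion time $\tau_k=\sup_n\tau_{k,n}$.

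\textbf{Stage 2: Compatibility across levels and landing in $\widetilde{\diff}(S^1)$.} Because the inclusions $\widetilde{H}^{k+1}\hookrightarrow\widetilde{H}^k$ are continuous and $\widetilde{\Phi}$ respects the scale, uniqueness in $\widetilde{H}^k$ forces $\widetilde{X}^{k+1}_t=\widetilde{X}^k_t$ on the smaller of their lifetimes, so $\tau_{k+1}\leq\tau_k$. I expect the a priori estimates from the local Lipschitz bounds established in Section 2, combined with the rapid decay of $\lambda(n)$, to show that no $\widetilde{H}^k$ norm can blow up before the lowest $\widetilde{H}^0$ norm does, so that all $\tau_k$ coincide with a common stopping time $\tau$. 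Applying Proposition \ref{intersection} then places the common process $\widetilde{X}_t$ into $\widetilde{\diff}(S^1)=\bigcap_k\widetilde{H}^k$ for every $t<\tau$.

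\textbf{Stage 3: Membership in $\Diff(S^1)$ up to a stopping time.} Invoking the pointwise criterion that an element $\tilde f\in\widetilde{\diff}(S^1)$ belongs to $\Diff(S^1)$ if and only if $\tilde f'(\theta)>0$ for every $\theta\in S^1$, I would introduce, for each small $\varepsilon>0$, the stopping time
\[
\sigma_\varepsilon=\inf\bigl\{t\geq 0 : \min_{\theta\in S^1}\widetilde{X}'_t(\theta)\leq\varepsilon\bigr\}.
\]
Since $\widetilde{X}_0=id$ has derivative identically equal to $1$ and the sample paths are continuous in $\widetilde{H}^k$ for every $k$ (hence in $C^1(S^1)$ via Sobolev embedding), $\sigma_\varepsilon>0$ almost surely, and $\widetilde{X}_t\in\Diff(S^1)$ on $[0,\sigma_\varepsilon)$.

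\textbf{Stage 4: Concatenation and non-explosion.} Proposition \ref{Sasha} and Remark \ref{concatenate} let me concatenate the piece of solution on $[0,\sigma_\varepsilon]$ with a fresh solution started at $\widetilde{X}_{\sigma_\varepsilon}\in\Diff(S^1)$, producing an extension on $[0,\sigma^{(2)}]$ still lying in $\Diff(S^1)$. Iterating gives an increasing sequence of stopping times $\sigma^{(n)}$ with $\widetilde{X}_t\in\Diff(S^1)$ on $[0,\lim_n\sigma^{(n)})$. The main obstacle, as I see it, is to prove that $\lim_n\sigma^{(n)}=\infty$ almost surely, i.e.\ true non-explosion; this is where the strong $H_\lambda$ metric is essential, since the rapid decay of $\lambda(n)$ yields uniform moment bounds on $\|\widetilde{X}_t\|_{\widetilde{H}^k}$ via an Itô estimate on each level, preventing the derivative from collapsing to zero in finite time. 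Combining non-explosion with Stages 1--3 gives the uniqueness, $\widetilde{H}^k$-regularity, and $\Diff(S^1)$-valuedness asserted in the theorem.
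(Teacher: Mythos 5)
Your Stages 1--3 track the paper's architecture closely (truncation, patching over exit times, intersection over $k$ via Proposition \ref{intersection}, and the criterion $\tilde f'>0$ of Proposition \ref{Difference}), but there is a genuine gap exactly where you locate the ``main obstacle'': non-explosion. You propose to get $\lim_n\sigma^{(n)}=\infty$ from ``uniform moment bounds on $\|\widetilde{X}_t\|_{\widetilde{H}^k}$ via an It\^o estimate on each level,'' crediting the rapid decay of $\lambda(n)$. That route does not work as stated: Proposition \ref{HS} gives $\|\Phi(f)\|_{L^2(H_\lambda,H^k)}^2\lesssim 1+\|f\|_{H^k}^{2k}$, which is superlinear for $k\ge2$, so an It\^o--Gronwall argument on $\|X_t\|_{H^k}^2$ yields a differential inequality that can blow up in finite time and gives no a priori bound. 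The mechanism the paper actually uses is not an analytic estimate at all but the left-invariance of Equation \eqref{eq1.6}: by Proposition \ref{Sasha} the restarted solution is the fresh solution (driven by the shifted Brownian motion) composed on the right with $\widetilde{X}^\infty(\tau_R)$, so the successive exit times $\tau_R,\tau_R',\dots$ from the $H^k$-ball of radius $R$ are i.i.d.\ copies of a strictly positive random variable, whence $\sum_n\tau_R^{(n)}\to\infty$ a.s.\ and $\tau_\infty=\infty$ (Proposition \ref{part2}). Without this (or some substitute), your argument only produces a solution in $\Diff(S^1)$ up to an explosion time you cannot rule out being finite.

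A secondary structural issue: your stopping time $\sigma_\varepsilon$ monitors only $\min_\theta\widetilde{X}_t'(\theta)$, so it decouples the diffeomorphism property from control of the $H^k$ norm, forcing you to prove two separate non-degeneracy statements (derivative not collapsing, norm not exploding). The paper avoids this by choosing $R<1/c_k$ so that $\|f\|_{H^k}\le R$ already forces $\|f'\|_{L^\infty}<1$, hence $\tilde f'>0$; the single norm exit time $\tau_R$ then simultaneously yields membership in $\Diff(S^1)$ and, via the i.i.d.\ concatenation, non-explosion. Also, your Stage 2 claim that no $\widetilde{H}^k$ norm can blow up before the $\widetilde{H}^0$ norm does is asserted without proof and is not needed: once $\tau_\infty=\infty$ is proved separately for each fixed $k$, uniqueness identifies the solutions across levels and Proposition \ref{intersection} applies directly.
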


\begin{acknowledgement}
The author would like to thank Maria Gordina for her advising through out
the preparation of the paper.
The author would like to thank Alexander Teplyaev for pointing out
the important idea which helped us to prove Theorem
\ref{theorem1.1}.
The author also would like to thank Matt Cecil
for many helpful discussions.
\end{acknowledgement}

\section{An interpretation of Equation \eqref{eq1.6}}\label{section2}

\subsection{The group $\Diff(S^1)$ and the Lie algebra $\diff(S^1)$}

Let $\Diff(S^1)$ be the group of orientation preserving $C^\infty$
diffeomorphisms of $S^1$, and $\diff(S^1)$ be the space of
$C^\infty$ vector fields on $S^1$. We have the following
identifications for the space $\diff(S^1)$:
\begin{align}
\diff(S^1) &\cong \{ f:S^1\to\mathbb{R} : f\in C^\infty \}\label{e.2.1} \\
&\cong \{ f:\mathbb{R}\to\mathbb{R} : f\in C^\infty, f(x)=f(x+2\pi),
\textrm{ for all }x\in\mathbb{R}\} \notag
\end{align}
Using this identification, we see that the space $\diff(S^1)$ has a
Fr\'{e}chet space structure. In addition, this space  has a Lie
algebra structure, namely, for $f,g\in\diff(S^1)$ the Lie bracket is
given by
\begin{equation}
[f,g]=f'g-fg',
\end{equation}
where $f'$ and $g'$ are derivatives with respect to the variable
$\theta\in S^1$. Therefore, the group $\Diff(S^1)$ is a Fr\'{e}chet
Lie group as defined in \cite{Milnor}.
\begin{notation}
Using the above identification \ref{e.2.1}, we also have an
identification for $\Diff(S^1)$
\begin{equation}\label{eq2.3}
\Diff(S^1) \cong \{ \tilde{f}:\mathbb{R}\to\mathbb{R} :
\tilde{f}=id+f, f\in\diff(S^1), \tilde{f}'>0 \},
\end{equation}
where $id$ is the identity function from $\mathbb{R}$ to
$\mathbb{R}$. We note that the set on the right hand side of the
above identification is a group with the group multiplication being
composition of functions. We require that for
$\tilde{f},\tilde{g}\in\Diff(S^1)$,
$\tilde{f}\tilde{g}=\tilde{g}\circ\tilde{f}$. Under this
identification, the left translation of $\Diff(S^1)$ is given by
$L_{\tilde{g}}\tilde{f}=\tilde{g}\tilde{f}=\tilde{f}\circ\tilde{g}$.

Denote
\begin{equation}\label{eq2.4}
\widetilde{\diff}(S^1)= \{ \tilde{f}:\mathbb{R}\to\mathbb{R} |
\tilde{f}=id+f, f\in\diff(S^1) \}
\end{equation}
\end{notation}
The space $\widetilde{\diff}(S^1)$ is an affine space
which is isomorphic to the vector space $\diff(S^1)$. We denote the
isomorphism by $\sim$, that is,
$\sim:\diff(S^1)\to\widetilde{\diff}(S^1)$,
$f\mapsto\tilde{f}=id+f$. Comparing \eqref{eq2.3} and \eqref{eq2.4},
we have the embedding
\begin{equation}\label{eq2.5}
\Diff(S^1)\subseteq\widetilde{\diff}(S^1).
\end{equation}
With this embedding, the differential of a left translation
$L_{\tilde{g}}$ becomes
$(L_{\tilde{g}})_{\ast}:\diff(S^1)\to\diff(S^1)$, and is given by
$(L_{\tilde{g}})_\ast f= f\circ\tilde{g}$ for $f\in\diff(S^1)$.

The following proposition is an immediate observation from the
identification \eqref{eq2.3} and  definition of
$\widetilde{\diff}(S^1)$ given by \eqref{eq2.4}. Yet, it plays a key
role in proving the main theorem Theorem \ref{theorem1.1}.
\begin{proposition}\label{Difference}
An element $\tilde{f}\in\widetilde{\diff}(S^1)$ belongs to $\Diff(S^1)$
if and only if $\tilde{f}'>0$, or equivalently $f'>-1$.
\end{proposition}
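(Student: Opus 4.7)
The plan is to unwind the definitions and verify that the two characterizations agree. Comparing the identification \eqref{eq2.3} of $\Diff(S^1)$ with the definition \eqref{eq2.4} of $\widetilde{\diff}(S^1)$, the two sets are literally described by the same formula $\tilde{f}=id+f$ with $f\in\diff(S^1)$, except that the former imposes the extra condition $\tilde{f}'>0$. Since $\tilde{f}=id+f$ gives $\tilde{f}'(\theta)=1+f'(\theta)$, the equivalence $\tilde{f}'>0\iff f'>-1$ is immediate, so the bulk of the work is establishing the first equivalence.

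The forward direction is tautological: if $\tilde{f}\in\Diff(S^1)$, then the identification \eqref{eq2.3} forces $\tilde{f}=id+f$ with $f\in\diff(S^1)$ and $\tilde{f}'>0$, so in particular $\tilde{f}\in\widetilde{\diff}(S^1)$ and $\tilde{f}'>0$.

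For the reverse direction, assume $\tilde{f}=id+f\in\widetilde{\diff}(S^1)$ with $\tilde{f}'>0$; I would check that $\tilde{f}$ lies in the right hand side of \eqref{eq2.3}, i.e.\ that it induces an orientation preserving $C^\infty$ diffeomorphism of $S^1$. Three short observations suffice. First, because $f$ is $2\pi$-periodic, one has $\tilde{f}(x+2\pi)=\tilde{f}(x)+2\pi$, so $\tilde{f}$ descends to a smooth self-map of $S^1$. Second, $\tilde{f}'$ is a continuous strictly positive function on the compact set $S^1$, hence bounded below by some $\varepsilon>0$; this makes $\tilde{f}$ strictly increasing and proper on $\mathbb{R}$, therefore a smooth bijection both on $\mathbb{R}$ and, via the equivariance just noted, on $S^1$. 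Third, the inverse function theorem upgrades this to a $C^\infty$ inverse, and the sign condition $\tilde{f}'>0$ makes the map orientation preserving.

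There is no serious obstacle: the proposition is in essence a restatement of the two definitions \eqref{eq2.3} and \eqref{eq2.4}. The only point I would be careful to record is that compactness of $S^1$ is what turns the pointwise inequality $\tilde{f}'>0$ into the uniform lower bound $\tilde{f}'\geq\varepsilon>0$ needed to deduce that $\tilde{f}^{-1}$ is smooth, so the proposition is a genuine consequence of the circle structure and would fail in a non-compact setting.
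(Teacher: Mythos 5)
Your proof is correct and matches the paper's intent: the paper gives no written proof at all, introducing the proposition as ``an immediate observation'' from the identifications \eqref{eq2.3} and \eqref{eq2.4}, and your argument is exactly the verification being left to the reader (tautological forward direction, and for the converse the equivariance $\tilde{f}(x+2\pi)=\tilde{f}(x)+2\pi$ plus $\tilde{f}'=1+f'>0$ plus the inverse function theorem). One minor quibble: smoothness of $\tilde{f}^{-1}$ needs only the pointwise nonvanishing of $\tilde{f}'$ via the local inverse function theorem, so the uniform bound $\tilde{f}'\ge\varepsilon$ obtained from compactness is not really what makes the inverse smooth --- it is the equivariance that gives surjectivity --- though recording it is harmless.
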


\vspace{1pt}
\subsection{The Hilbert space $H_\lambda$ and the Brownian motion $W_t$}

To define the Brownian motion $W_t$ in Equation \eqref{eq1.6}, we
need to choose a metric on the Lie algebra $\diff(S^1)$. Comparing
with the $H^{3/2}$ metric that P.~Malliavin and S.~Fang chose, the
metric we choose here is a very ``strong'' metric.

\begin{definition}\label{scaling}
Let $\mathcal{S}$ be the set of even functions $\lambda:\mathbb{Z}\to (0, \infty)$
 such that $\lim_{n\to\infty} |n|^k\lambda(n)=0$ for all $k\in\mathbb{N}$.
For $\lambda\in\mathcal{S}$,
let $\hat{e}_n=\hat{e}_n^{(\lambda)}\in\diff(S^1)$ be defined by
\begin{equation}
\hat{e}_n^{(\lambda)}(\theta)=\left\{
\begin{array}{ll}
\lambda(n)\cos(n\theta), & n\ge0 \\
\lambda(n)\sin(n\theta), & n<0
\end{array}
\right.
\end{equation}
Let $H_{\lambda}$ be the
Hilbert space with the set $\{\hat{e}_n^{(\lambda)}\}_{n\in\mathbb{Z}}$ as an
orthonormal basis.
\end{definition}
Note that  the function $\lambda$ is rapidly decreasing, therefore
the Hilbert space $H_\lambda$ defined above is a \emph{proper
subspace} of $\diff(S^1)$. We also remark that
$\diff(S^1)=\bigcup_{\lambda\in\mathcal{S}} H_\lambda$.

Let $\alpha,\lambda\in\mathcal{S}$ be defined by
$\lambda(n)=|n|\alpha(n)$, and let $H_\alpha$ and $H_\lambda$ be the
corresponding Hilbert subspaces of $\diff(S^1)$. Then we have
$H_\alpha\subset H_\lambda$, and the inclusion map
$\iota:H_\alpha\hookrightarrow H_\lambda$ that sends
$\hat{e}_n^{(\alpha)}$ to
$\hat{e}_n^{(\alpha)}=\frac{1}{|n|}\hat{e}_n^{(\lambda)}$ is a
Hilbert-Schmidt operator. The adjoint operator
$\iota^\ast:H_\lambda\to H_\alpha$ that sends
$\hat{e}_n^{(\lambda)}$ to $\frac{1}{|n|}\hat{e}_n^{(\alpha)}$ is
also a Hilbert-Schmidt operator. The operator
$Q_\lambda=\iota\iota^\ast:H_\lambda\to H_\lambda$ is a trace class
operator on $H_\lambda$, and $H_\alpha=Q_\lambda^{1/2} H_\lambda$.

\begin{definition}\label{Wt}
Let $W_t$ be a Brownian motion defined by
\begin{equation}
W_t=\sum_{n\in\mathbb{Z}} B_t^{(n)}\hat{e}_n^{(\alpha)}
=\sum_{n\in\mathbb{Z}} \frac{1}{|n|}B_t^{(n)}\hat{e}_n^{(\lambda)}
\end{equation}
where $\{B_t^{(n)}\}_n$ are mutually independent standard $\mathbb{R}$-valued Brownian motions.
\end{definition}

We see that $W_t$ is a cylindrical Brownian motion on $H_\alpha$
with the covariance operator being the identity operator on
$H_\alpha$. Also, $W_t$ is a Brownian motion on $H_\lambda$ with the
covariance operator being the operator $Q_\lambda$.

\vspace{1pt}
\subsection{The Sobolev space $H^k$ and the affine space $\widetilde{H}^k$}

Now we turn to the Sobolev spaces over $S^1$. Let us first recall
some basic properties of the Sobolev spaces over $S^1$ found for example in
\cite{Adams}.

Let $k$ be a non-negative integer. Denote by  $C^k$ the space of
$k$-times continuously differentiable real-valued functions on
$S^1$, and denote by $H^k$ the $k$th Sobolev space on $S^1$. Recall
that $H^k$ consists of functions $f: S^1 \to \mathbb{R}$ such that
$f^{(k)}\in L^2$, where $f^{(k)}$ is the $k$th derivative of $f$ in
distributional sense. The Sobolev space $H^k$ has a norm given by
\begin{equation}
\|f\|_{H^k}^2=\|f\|_{L^2}^2 + \|f^{(k)}\|_{L^2}^2
\end{equation}
The Sobolev space $H^k$ is a separable Hilbert space, and $C^k$ is a
dense subspace of $H^k$. We will make use of the following standard
properties of the spaces $H^{k}$.

\begin{theorem}[\cite{Adams}]\label{SobolevFacts}
Let $m,k$ be two non-negative integers.
\begin{enumerate}
\item\label{Sobolev1}
If $m\le k$ and $f\in H^k$, then $\|f\|_{H^m}\le\|f\|_{H^k}$.
\item\label{Sobolev2}
If $m<k$ and $f\in H^k$, then there exists a constant $c_k$ such that
$ \|f^{(m)}\|_{L^\infty}\le c_k \|f\|_{H^k} $.
\item\label{Sobolev3}
$H^{k+1}\subseteq H^k$ for all $k=0,1,2,\cdots$, and $\diff(S^1)=\bigcap_{k=0}^\infty H^k$.
\end{enumerate}
\end{theorem}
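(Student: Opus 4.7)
My plan is to reduce all three parts to Fourier analysis on $S^1$, which is especially clean because differentiation is diagonalized by the exponentials. Writing $f(\theta) = \sum_{n \in \mathbb{Z}} \hat{f}(n)\, e^{in\theta}$, the $k$th distributional derivative $f^{(k)}$ has Fourier coefficients $(in)^k \hat{f}(n)$, so by Parseval
\begin{equation*}
\|f\|_{H^k}^2 \;=\; 2\pi \sum_{n \in \mathbb{Z}} (1 + n^{2k})\, |\hat{f}(n)|^2.
\end{equation*}
In particular, $f \in H^k$ if and only if this weighted sum is finite, which removes any subtlety about the distributional definition of $f^{(k)}$ and puts all three statements on the same footing.

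For part~\eqref{Sobolev1}, I would compare the Fourier weights mode-by-mode: for $m \le k$ and every $n \in \mathbb{Z}$ one has $1 + n^{2m} \le 1 + n^{2k}$ (trivial at $n=0$; for $|n| \ge 1$ because $n^{2m} \le n^{2k}$), and summing against $|\hat{f}(n)|^2$ gives $\|f\|_{H^m} \le \|f\|_{H^k}$. For part~\eqref{Sobolev2}, I would bound the sup-norm by the absolute Fourier sum and apply Cauchy-Schwarz after splitting off the zero mode:
\begin{equation*}
\|f^{(m)}\|_{L^\infty} \;\le\; \sum_{n \in \mathbb{Z}} |n|^m |\hat{f}(n)| \;\le\; |\hat{f}(0)| + \Bigl(\sum_{n \neq 0} \frac{1}{|n|^{2(k-m)}}\Bigr)^{1/2} \Bigl(\sum_{n \neq 0} n^{2k} |\hat{f}(n)|^2\Bigr)^{1/2}.
\end{equation*}
Because the hypothesis $m < k$ forces $2(k-m) \ge 2 > 1$, the first factor is a finite constant depending only on $k$ and $m$; the second factor is at most $\|f\|_{H^k}/\sqrt{2\pi}$, and $|\hat{f}(0)|$ is bounded by $\|f\|_{L^2}/\sqrt{2\pi} \le \|f\|_{H^k}/\sqrt{2\pi}$. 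Combining these yields the asserted constant $c_k$.

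For part~\eqref{Sobolev3}, the inclusion $H^{k+1} \subseteq H^k$ is just part~\eqref{Sobolev1} with $m=k$. The inclusion $\diff(S^1) \subseteq \bigcap_k H^k$ is immediate since a smooth periodic function has every derivative bounded, hence in $L^2$. For the reverse, suppose $f \in H^k$ for every $k$; then for each fixed $m$ the Cauchy-Schwarz bound above, applied with $k = m+2$, gives $\sum |n|^m |\hat{f}(n)| < \infty$, so the Fourier series of $f^{(m)}$ converges absolutely and uniformly and thus represents a continuous function. Since this holds for every $m$, we conclude $f \in C^\infty(S^1) = \diff(S^1)$. There is no real obstacle in this argument: it is the standard textbook treatment of Sobolev spaces on the torus (the reference to \cite{Adams} covers the general theory), and the only minor point requiring care is interpreting $f^{(k)}$ distributionally, which the Fourier formulation resolves automatically.
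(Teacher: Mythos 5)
The paper does not actually prove this statement: it is quoted as standard material and attributed wholesale to Adams' book, so there is no internal argument to compare yours against. Your Fourier-series proof is correct and is the natural self-contained route on $S^1$: Parseval turns $\|f\|_{H^k}^2=\|f\|_{L^2}^2+\|f^{(k)}\|_{L^2}^2$ into the weighted sum $2\pi\sum_n(1+n^{2k})|\hat f(n)|^2$, after which item (1) is a termwise weight comparison, item (2) is Cauchy--Schwarz against the convergent series $\sum_{n\neq 0}|n|^{-2(k-m)}$ (convergent precisely because $m<k$), and item (3) follows by combining the two; this buys explicit constants and makes the characterization $\bigcap_k H^k=\diff(S^1)$ via absolute convergence of the differentiated Fourier series completely transparent, whereas the citation to \cite{Adams} invokes the general embedding theory for domains without exhibiting the constants. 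One small caveat: under the paper's convention $\|f\|_{H^0}^2=\|f\|_{L^2}^2+\|f^{(0)}\|_{L^2}^2=2\|f\|_{L^2}^2$, the weight for $m=0$ is $2$ at every mode, so the comparison $1+n^{2m}\le 1+n^{2k}$ is \emph{not} ``trivial at $n=0$'' when $m=0$: a nonzero constant function has $\|f\|_{H^0}>\|f\|_{H^k}$ for $k\ge1$. This is an artifact of the norm convention rather than a defect of your method (it disappears if one writes $\|f\|_{H^0}=\|f\|_{L^2}$ or uses the weight $(1+n^2)^k$), and it is harmless for every application of item (1) in the paper, but you should either restrict (1) to $1\le m\le k$ or note the convention explicitly.
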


An element $f\in H^k$ can be identified with a $2\pi$-periodic
function from $\mathbb{R}$ to $\mathbb{R}$. Define
\begin{equation}
\widetilde{H}^k=\{\tilde{f}:\mathbb{R}\to\mathbb{R}: \tilde{f}=id+f,
f\in H^k \}
\end{equation}
Then $\widetilde{H}^k$ is an affine space that is isomorphic to the
Sobolev space $H^k$. We denote the isomorphism by $\sim$, that is,
$\sim:H^k\to\widetilde{H}^k$, $f\mapsto\tilde{f}=id+f$. The image of
$C^k$ under the isomorphism, denoted by $\widetilde{C}^k$, is a
dense subspace of the affine space $\widetilde{H}^k$. An element
$\tilde{f}\in\widetilde{H}^k$ can be identified as a function from
$S^1$ to $S^1$. By item (3) in Theorem \ref{SobolevFacts}, we have
$\widetilde{H}^{k+1}\subseteq \widetilde{H}^k$ and
$\widetilde{\diff}(S^1)=\bigcap_k \widetilde{H}^k$.

Now we have the following embeddings:
\begin{equation}\label{Embedding}
\Diff(S^1)\subseteq \widetilde{\diff}(S^1)
\subseteq \cdots
\subseteq \widetilde{H}^3
\subseteq \widetilde{H}^2
\subseteq \widetilde{H}^1,
\end{equation}
and we can interpret Equation \eqref{eq1.6} as a sequence of
stochastic differential equations on the sequence of
affine spaces $\{\widetilde{H}^k\}_{k=1}^\infty$.

\vspace{1pt}
\subsection{The operator $\widetilde{\Phi}$ and $\Phi$}

For $\tilde{g}\in\Diff(S^1)$, let $(L_{\tilde{g}})_\ast$ be the
differential of the left translation. In accordance with the
notation used by DaPrato and Zabczyk in \cite{DaPrato}, we denote
$(L_{\tilde{g}})_\ast$ by $\widetilde{\Phi}(\tilde{g})$.

Initially,
$\widetilde{\Phi}:\Diff(S^1)\to(\diff(S^1)\to\diff(S^1))$, which
means $\widetilde{\Phi}$ takes an element $\tilde{g}\in\Diff(S^1)$
and becomes a linear transformation $\widetilde{\Phi}(\tilde{g})$
from $\diff(S^1)$ to $\diff(S^1)$ (see subsection 2.1). Because we
want to interpret Equation \eqref{eq1.6} as an SDE on
$\widetilde{H}^k$ and use DaPrato and Zabczyk's theory
\cite{DaPrato}, we need the operator $\widetilde{\Phi}$ to be
extended as $\widetilde{\Phi}:\widetilde{H}^k\to(H_\lambda\to H^k)$,
which means $\widetilde{\Phi}$ takes an element
$\tilde{g}\in\widetilde{H}^k$ and becomes a linear transformation
$\widetilde{\Phi}(\tilde{g})$ from $H_\lambda$ to $H^k$
\cite{DaPrato}.

Let $L(H_\lambda,H^k)$ be the space of linear transformations from $H_\lambda$ to $H^k$.
Define a mapping
\begin{equation}\label{AffinePhi1}
\widetilde{\Phi}:\widetilde{C}^k\to L(H_\lambda,H^k)
\end{equation}
such that if $\tilde{f}\in \widetilde{C}^k$, $g\in H_\lambda$,
then $\widetilde{\Phi}(\tilde{f})(g)=g\circ\tilde{f}$.
The mapping $\widetilde{\Phi}$ is easily seen to be well defined.
Sometimes, it is easier to work with the vector space $C^k$.
So we similarly define a mapping
\begin{equation}\label{Phi1}
\Phi:C^k\to L(H_\lambda,H^k)
\end{equation}
such that if $f\in C^k$, $g\in H_\lambda$, then $\Phi(f)(g)=g\circ\tilde{f}$,
where $\tilde{f}=id+f$ is the image of $f$ under the isomorphism $\sim$.

Let $L^2(H_\lambda,H^k)$ denote the space of Hilbert-Schmidt operators from $H_\lambda$ to $H^k$.
The space $L^2(H_\lambda,H^k)$ is a separable Hilbert space.
For $T\in L^2(H_\lambda,H^k)$, the norm of $T$ is given by
\[
\|T\|_{L^2(H_\lambda,H^k)}^2=\sum_{n\in\mathbb{Z}} \|T\hat{e}_n^{(\lambda)}\|_{H^k}^2
\]
where $\hat{e}_n^{(\lambda)}$ is defined in Definition (\ref{scaling}).

To use DaPrato and Zabczyk's theory \cite{DaPrato}, we need $\widetilde{\Phi}$ to be
$\widetilde{\Phi}:\widetilde{H}^k\to L^2(H_\lambda,H^k)$ or equivalently,
we need $\Phi$ to be $\Phi:H^k\to L^2(H_\lambda,H^k)$.
We will also need some Lipschitz condition of $\widetilde{\Phi}$ and $\Phi$.
These are proved in proposition (\ref{HS}) and (\ref{LocalLip}).
Both propositions need the Fa\`{a} di Bruno's formula for higher derivatives of a
composition function.

\begin{theorem}[Fa\`{a} di Bruno's formula \cite{Bruno}]\label{Bruno}

\begin{equation}\label{BrunoFormula}
f(g(x))^{(n)}= \sum_{k=0}^n f^{(k)}(g(x))
B_{n,k}(g'(x),g''(x),\cdots,g^{(n-k+1)}(x)),
\end{equation}
where $B_{n,k}$ is the Bell polynomial
\[
B_{n,k}(x_1,\cdots,x_{n-k+1})= \sum \frac{n!}{j_1!\cdots j_{n-k+1}!}
\Big(\frac{x_1}{1!}\Big)^{j_1}\cdots
\Big(\frac{x_{n-k+1}}{(n-k+1)!}\Big)^{j_{n-k+1}},
\]
and the summation is taken over all sequences of
$\{j_1,\cdots,j_{n-k+1}\}$ of nonnegative integers such that
$j_1+\cdots+j_{n-k+1}=k$ and $j_1+2j_2+\cdots+(n-k+1)j_{n-k+1}=n$.
\end{theorem}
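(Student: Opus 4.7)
The plan is to establish Fa\`{a} di Bruno's formula by induction on $n$, with a combinatorial recurrence for the Bell polynomials $B_{n,k}$ serving as the algebraic engine. Since this is a classical identity (cited here from \cite{Bruno}), I would follow the standard argument.

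The base case $n=1$ is immediate: the only admissible index set in the defining sum of $B_{1,1}$ is $j_1=1$, which yields $B_{1,1}(g'(x))=g'(x)$, so the identity reduces to the ordinary chain rule $(f\circ g)'(x)=f'(g(x))\,g'(x)$.

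For the inductive step I would assume the formula at order $n$ and differentiate both sides with respect to $x$. By the product rule, each summand $f^{(k)}(g(x))\,B_{n,k}(g'(x),\dots,g^{(n-k+1)}(x))$ splits as
\[
f^{(k+1)}(g(x))\,g'(x)\,B_{n,k}(\cdots)+f^{(k)}(g(x))\,\tfrac{d}{dx}B_{n,k}(\cdots),
\]
where $\tfrac{d}{dx}B_{n,k}(g',g'',\dots)$ is computed via $\partial_{x_i}B_{n,k}\cdot g^{(i+1)}$. After reindexing $k\mapsto k-1$ in the first piece and collecting the coefficient of $f^{(k)}(g(x))$, the desired order-$(n+1)$ identity reduces to the Bell polynomial recurrence
\[
B_{n+1,k}(x_1,\dots,x_{n-k+2})=\sum_{j=0}^{n-k+1}\binom{n}{j}\,x_{j+1}\,B_{n-j,k-1}(x_1,\dots,x_{n-j-k+2}),
\]
which can be verified either combinatorially, by interpreting $B_{n,k}$ as the weighted generating polynomial of partitions of an $n$-element set into $k$ nonempty blocks (with $\binom{n}{j}$ counting the elements placed in the block containing the newly inserted point), or by direct manipulation of the multinomial coefficients appearing in the defining sum of $B_{n,k}$.

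The main bookkeeping obstacle is to confirm that the two constraints $j_1+\cdots+j_{n-k+1}=k$ and $j_1+2j_2+\cdots+(n-k+1)j_{n-k+1}=n$ propagate consistently to their analogs at level $n+1$ under the derivative of the monomial $\prod_i(x_i/i!)^{j_i}$ (which decrements one $j_i$ and increments the adjacent $j_{i+1}$) together with the separate $g'(x)$ factor coming from differentiating $f^{(k)}$. An alternative route that sidesteps this bookkeeping is via formal power series: expanding $g(x_0+h)-g(x_0)=\sum_{m\ge 1}\frac{g^{(m)}(x_0)}{m!}h^m$, substituting into the Taylor expansion $f(y)=\sum_k\frac{f^{(k)}(g(x_0))}{k!}(y-g(x_0))^k$, and applying the multinomial theorem, one reads off the coefficient of $h^n/n!$ in $f(g(x_0+h))$ as precisely the right-hand side of \eqref{BrunoFormula}.
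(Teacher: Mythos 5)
The paper does not actually prove this statement: Fa\`{a} di Bruno's formula is quoted as a classical result from \cite{Bruno} and is used only as a black box in Propositions \ref{HS} and \ref{LocalLip}, so there is no in-paper argument to compare yours against. Your outline is the standard proof and is correct in substance. The induction on $n$ works exactly as you describe: after differentiating the order-$n$ identity and reindexing, matching the coefficient of $f^{(k)}(g(x))$ reduces the claim to $g'\,B_{n,k-1}+\tfrac{d}{dx}B_{n,k}=B_{n+1,k}$, which is your recurrence once one knows $\partial_{x_i}B_{n,k}=\binom{n}{i}B_{n-i,k-1}$; that identity is precisely the block-marking argument you mention (a partition of an $n$-set into $k$ blocks with a distinguished block of size $i$ is the same as a choice of $i$ elements together with a partition of the remaining $n-i$ elements into $k-1$ blocks), so the bookkeeping you flag does close. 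Two small cautions. First, the $k=0$ term in \eqref{BrunoFormula} is vacuous for $n\ge 1$ since $B_{n,0}=0$ there; your base case should note this so that the sum effectively starts at $k=1$. Second, the power-series alternative as written presupposes that $f$ is analytic; to obtain the statement for $f,g$ merely $n$ times differentiable one should either stay with the induction or observe that both sides are universal polynomials in $f'(g(x)),\dots,f^{(n)}(g(x))$ and $g'(x),\dots,g^{(n)}(x)$, so that it suffices to verify the identity for polynomials, where the formal substitution and the multinomial theorem are legitimate. For the purposes of this paper only the qualitative structure of the expansion is used (each term is $f^{(j)}(g)$ times a monomial in $g',\dots,g^{(n)}$ of degree at most $n$, with $f'(g)\,g^{(n)}$ the unique term containing the top derivative), and your argument certainly delivers that.
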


We remark that after expanding expression \eqref{BrunoFormula},
$f(g(x))^{(n)}$ can be viewed as a summation of several terms,
each of which has the form
\[
f^{(j)}(g(x))m(g',g'',\cdots,g^{(n)})
\]
where $j\le n$ and $m(g',g'',\cdots,g^{(n)})$ is a \emph{monomial}
in $g',g'',\cdots,g^{(n)}$.
Also observe that, the only term that involves the highest derivative
of $g$ is $f'(g(x))g^{(n)}(x)$.

\begin{proposition}\label{HS}
For any $f\in C^k$, $k=0,1,2,\cdots$, $\Phi(f)\in L^2(H_\lambda,H^k)$.
\end{proposition}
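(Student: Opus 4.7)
The plan is to compute $\|\Phi(f)\|_{L^2(H_\lambda,H^k)}^2 = \sum_{n\in\mathbb{Z}} \|\hat{e}_n^{(\lambda)}\circ\tilde{f}\|_{H^k}^2$ directly and show that each summand decays rapidly in $n$. Since the $H^k$ norm is controlled by the $L^2$ norms of the function and its $k$th derivative, and $|\hat{e}_n^{(\lambda)}(\theta)|\le\lambda(n)$ uniformly on $S^1$, the only interesting quantity is the $L^2$ norm of $\frac{d^k}{d\theta^k}\hat{e}_n^{(\lambda)}(\tilde{f}(\theta))$.

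I would expand this $k$th derivative using Faà di Bruno's formula (Theorem \ref{Bruno}). Each term has the shape
\[
(\hat{e}_n^{(\lambda)})^{(j)}(\tilde{f}(\theta))\,m_j\bigl(\tilde{f}'(\theta),\tilde{f}''(\theta),\dots,\tilde{f}^{(k)}(\theta)\bigr)
\]
for $0\le j\le k$, where each $m_j$ is a fixed monomial in the derivatives of $\tilde{f}$ up to order $k$. Because $f\in C^k$, every derivative $\tilde{f}^{(i)}=f^{(i)}$ for $i\ge 2$ and $\tilde{f}'=1+f'$ is continuous on the compact set $S^1$, so each monomial $m_j$ is bounded on $S^1$ by a constant $M_j=M_j(f)$ independent of $n$. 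Since $\hat{e}_n^{(\lambda)}$ is either $\lambda(n)\cos(n\theta)$ or $\lambda(n)\sin(n\theta)$, we have the uniform pointwise bound
\[
\bigl|(\hat{e}_n^{(\lambda)})^{(j)}\bigr|\le \lambda(n)\,|n|^{j}\le \lambda(n)\bigl(1+|n|\bigr)^{k}.
\]

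Combining these bounds term by term yields a constant $C_k=C_k(f)$ such that
\[
\bigl|\tfrac{d^k}{d\theta^k}\hat{e}_n^{(\lambda)}(\tilde{f}(\theta))\bigr|\le C_k\,\lambda(n)\bigl(1+|n|\bigr)^{k}
\]
pointwise, and hence $\|\hat{e}_n^{(\lambda)}\circ\tilde{f}\|_{H^k}^2\le 2\pi\,C_k^{\,2}\,\lambda(n)^2(1+|n|)^{2k}$ after also estimating the $L^2$ norm of $\hat{e}_n^{(\lambda)}\circ\tilde{f}$ (which is even simpler since $|\hat{e}_n^{(\lambda)}|\le\lambda(n)$). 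The main step is essentially this uniform derivative bound; there is no real obstacle since $S^1$ is compact and $\lambda$ controls the amplitude of every $\hat{e}_n^{(\lambda)}$.

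Finally, the definition of $\mathcal{S}$ (Definition \ref{scaling}) gives $|n|^{k+1}\lambda(n)\to 0$, so
\[
\sum_{n\in\mathbb{Z}}\lambda(n)^2(1+|n|)^{2k}<\infty,
\]
and therefore $\|\Phi(f)\|_{L^2(H_\lambda,H^k)}<\infty$, which proves $\Phi(f)\in L^2(H_\lambda,H^k)$. The only ``subtlety'' worth double-checking is that the constant $C_k$ genuinely depends only on uniform bounds on $f',\dots,f^{(k)}$ — which is where the remark following Theorem \ref{Bruno} (that $(\hat{e}_n\circ\tilde{f})^{(k)}$ is a sum of products of one derivative of $\hat{e}_n$ and a monomial in derivatives of $\tilde{f}$) is used explicitly.
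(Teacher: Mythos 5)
Your proof is correct, and it follows the same overall strategy as the paper: write out the Hilbert--Schmidt norm as $\sum_n\|\hat{e}_n\circ\tilde f\|_{H^k}^2$, expand the $k$th derivative of the composition by Fa\`a di Bruno, bound each term by $\lambda(n)$ times a power of $|n|$, and sum using the rapid decay of $\lambda$. The one genuine difference is in how you control the monomials in the derivatives of $f$. The paper splits the Fa\`a di Bruno expansion into the terms not involving $f^{(k)}$ (whose monomials it bounds in $L^\infty$ via the Sobolev embedding $\|f^{(m)}\|_{L^\infty}\le c_k\|f\|_{H^k}$ for $m<k$) and the single term $\hat{e}_n'(id+f)f^{(k)}$ (which it must estimate in $L^2$, since the embedding does not control $\|f^{(k)}\|_{L^\infty}$ by $\|f\|_{H^k}$); this yields a bound depending only on $\|f\|_{H^k}$, in the same spirit as the Lipschitz estimates needed later. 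You instead use the hypothesis $f\in C^k$ directly: all derivatives up to order $k$, including $f^{(k)}$, are continuous on the compact $S^1$ and hence uniformly bounded, so no case split is needed and no Sobolev embedding is invoked. Your constant $C_k(f)$ then depends on the $C^k$ norm of $f$ rather than on $\|f\|_{H^k}$, which is perfectly adequate for the statement as given (membership of $\Phi(f)$ in $L^2(H_\lambda,H^k)$ for a fixed $f\in C^k$), and is arguably the more elementary route; the paper's version buys uniformity over $H^k$-balls, which it does not actually need until Proposition \ref{LocalLip}. The only cosmetic slip is the constant in $\|\hat{e}_n\circ\tilde f\|_{H^k}^2\le 2\pi C_k^2\lambda(n)^2(1+|n|)^{2k}$, which should be $2\pi(1+C_k^2)\lambda(n)^2(1+|n|)^{2k}$ to account for the $L^2$ part of the norm; this does not affect summability.
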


\begin{proof}
\begin{align*}
\|\Phi(f)\|_{L^2(H_\lambda,H^k)}^2 & =\sum_{n\in\mathbb{Z}} \|\Phi(f)(\hat{e}_n)\|_{H^k}^2\\
&=\sum_{n\in\mathbb{Z}} \|\hat{e}_n(id+f)\|_{L^2}^2 + \|\hat{e}_n(id+f)^{(k)}\|_{L^2}^2,
\end{align*}
where $\hat{e}_n$ is defined in Definition (\ref{scaling}) and we have suppressed
the index $\lambda$ here.
$\hat{e}_n(id+f)$ denotes the function $\hat{e}_n$ composed with
$id+f$, and $\hat{e}_n(id+f)^{(k)}$ is the $k$th derivative of
$\hat{e}_n(id+f)$.

First, we have
\[
\|\hat{e}_n(id+f)\|_{L^2}^2\le \lambda(n)^2.
\]
We apply Fa\`{a} di Bruno's formula \eqref{BrunoFormula} to
$\hat{e}_n(id+f)^{(k)}$, and then expand it to a summation of
several terms. We are going to deal with the terms with and without
$f^{(k)}$, the highest derivative of $f$, separately.
So we write the summaion as
\begin{equation}\label{eq2.11}
\hat{e}_n(id+f)^{(k)}=...\textrm{ terms without } f^{(k)}... +
\hat{e}_n'(id+f)f^{(k)},
\end{equation}
where each term without $f^{(k)}$ has the form
\[
\hat{e}_n^{(j)}(id+f)m(f',f'',\cdots,f^{(k-1)})
\]
with $j\le k$ and $m(f',f'',\cdots,f^{(k-1)})$ a \emph{monomial} in
$f',f'',\cdots,f^{(k-1)}$. Let $d$ be the degree of the
monomial $m(f',f'',\cdots,f^{(k-1)})$. Then from Fa\`{a} di Bruno's formula
we see that $d\le k$ for all monomials.

By Definition \ref{scaling} of $\hat{e}_n$ and using item (\ref{Sobolev2}) in
Theorem \ref{SobolevFacts}, we have
\begin{align}
&\|\hat{e}_n^{(j)}(id+f)m(f',f'', \cdots, f^{(k-1)})\|_{L^2} \notag \\
&\le \|\hat{e}_n^{(j)}(id+f)\|_{L^\infty}
\|m(f',f'',\cdots,f^{(k-1)})\|_{L^\infty} \label{eq2.12}
\\
& \le \lambda(n)|n|^k c_k^k\|f\|_{H^k}^k. \notag
\end{align}

For the last term in expression \eqref{eq2.11}, we have
\begin{align}
\|\hat{e}_n'(id+f)f^{(k)}\|_{L^2}
&\le \|\hat{e}_n'(id+f)\|_{L^\infty}\|f^{(k)}\|_{L^2}\label{eq2.13} \\
&\le \lambda(n)|n| \|f\|_{H^k} \le \lambda(n)|n|^k
c_k^k\|f\|_{H^k}^k. \notag
\end{align}

By \eqref{eq2.12} and \eqref{eq2.13}, we have

\[
\|\hat{e}_n(id+f)^{(k)}\|_{L^2}^2 \le K \lambda(n)^2 |n|^{2k}
c_k^{2k}\|f\|_{H^k}^{2k},
\]
where $K$ is the number of terms in expression \eqref{eq2.11}, which
depends on $k$ but does not depend on $n$.
Therefore,
\[
\|\Phi(f)\|_{L^2(H_\lambda,H^k)}^2 \le \sum_{n\in\mathbb{Z}} \left(\lambda(n)^2 + K
\lambda(n)^2 |n|^{2k} c_k^{2k}\|f\|_{H^k}^{2k}\right)
\]
Because $\lambda(n)$ is rapidly decreasing (Definition \ref{scaling}),
$\sum_{n\in\mathbb{Z}}\lambda(n)^2 |n|^{2k}<\infty$.
Therefore, we have
\[
\|\Phi(f)\|_{L^2(H_\lambda,H^k)}^2 < \infty
\]

\end{proof}

Now $\Phi$ can be viewed as a mapping $\Phi:C^k\to
L^2(H_\lambda,H^k)$. Similarly, $\widetilde{\Phi}$ can be viewed as
a mapping $\widetilde{\Phi}:\widetilde{C}^k\to L^2(H_\lambda,H^k)$.
To use DaPrato and Zabczyk's theory \cite{DaPrato}, we will need
the Lipschitz condition of $\Phi$ and $\widetilde{\Phi}$.
It turns out that they are \emph{locally} Lipschitz.
Let us recall the concept of local Lipschitzness: Let $A$ and $B$ be
two normed linear spaces with norm $\|\cdot\|_A$ and $\|\cdot\|_B$
respectively. A mapping $f:A\to B$ is said to be \emph{locally
Lipschitz} if for $R>0$, and $x,y\in A$ such that $\|x\|,\|y\|\le
R$, we have
\[
\|f(x)-f(y)\|_B \le C_{R} \|x-y\|_A,
\]
where $C_N$ is a constant which in general depends on $N$.

\begin{proposition}\label{LocalLip}
For any $k=0,1,2,\cdots$, $\Phi:C^k\to L^2(H_\lambda,H^k)$ is locally Lipschitz.
\end{proposition}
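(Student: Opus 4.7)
The plan is to estimate, for $f, h \in C^k$ with $\|f\|_{H^k}, \|h\|_{H^k} \le R$, the Hilbert--Schmidt norm
\[
\|\Phi(f) - \Phi(h)\|_{L^2(H_\lambda,H^k)}^2 = \sum_{n\in\mathbb{Z}} \Big( \|\hat{e}_n\circ\tilde{f}-\hat{e}_n\circ\tilde{h}\|_{L^2}^2 + \|(\hat{e}_n\circ\tilde{f})^{(k)}-(\hat{e}_n\circ\tilde{h})^{(k)}\|_{L^2}^2 \Big),
\]
mirroring the structure of the proof of Proposition \ref{HS}. For the $L^2$ part, the mean value theorem applied to $\hat{e}_n$ gives $|\hat{e}_n(\tilde{f}(\theta))-\hat{e}_n(\tilde{h}(\theta))| \le \lambda(n)|n| |f(\theta)-h(\theta)|$, so this part contributes at most $\lambda(n)^2|n|^2\|f-h\|_{L^2}^2$, which is summable in $n$ by the rapid decrease of $\lambda$.

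For the derivative part, I would apply Fa\`a di Bruno's formula (Theorem \ref{Bruno}) to both composites and subtract matching terms. As observed after Theorem \ref{Bruno}, the only term involving the top derivative $f^{(k)}$ is $\hat{e}_n'(\tilde{f})f^{(k)}$, while every other term has the form $\hat{e}_n^{(j)}(\tilde{f})\,m(f',\ldots,f^{(k-1)})$ with $j\le k$ and $m$ a monomial of degree $d\le k$. For each matching pair I would use the splitting
\[
\hat{e}_n^{(j)}(\tilde{f})\,m(f)-\hat{e}_n^{(j)}(\tilde{h})\,m(h) = \big[\hat{e}_n^{(j)}(\tilde{f})-\hat{e}_n^{(j)}(\tilde{h})\big]\,m(f) + \hat{e}_n^{(j)}(\tilde{h})\,\big[m(f)-m(h)\big].
\]
The first summand uses $\|\hat{e}_n^{(j)}(\tilde{f})-\hat{e}_n^{(j)}(\tilde{h})\|_{L^\infty} \le \lambda(n)|n|^{j+1}\|f-h\|_{L^\infty}$ (another mean value theorem), and for the second summand I would telescope $m(f)-m(h)$ into a sum in which exactly one factor is some $f^{(i)}-h^{(i)}$ and the remaining factors are derivatives of $f$ or $h$ of order $\le k-1$.

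For the lower-order terms all factors have derivative order $\le k-1$, hence can be bounded in $L^\infty$ using the Sobolev embedding (Theorem \ref{SobolevFacts}, item (\ref{Sobolev2})); this yields a pointwise bound of the form $\lambda(n)|n|^{j+1} C_{R,k}\|f-h\|_{H^k}$. The top-order term requires care: after splitting,
\[
\hat{e}_n'(\tilde{f})f^{(k)}-\hat{e}_n'(\tilde{h})h^{(k)} = \big[\hat{e}_n'(\tilde{f})-\hat{e}_n'(\tilde{h})\big]f^{(k)} + \hat{e}_n'(\tilde{h})\big[f^{(k)}-h^{(k)}\big],
\]
I would keep $f^{(k)}$ and $f^{(k)}-h^{(k)}$ in $L^2$ and pair them with the $L^\infty$ factors $\|\hat{e}_n'(\tilde{f})-\hat{e}_n'(\tilde{h})\|_{L^\infty}\le\lambda(n)|n|^2\|f-h\|_{L^\infty}$ and $\|\hat{e}_n'(\tilde{h})\|_{L^\infty}\le\lambda(n)|n|$ respectively. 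Squaring, summing over $n$, and summing over the finitely many (in $k$) Fa\`a di Bruno terms, one gets an overall bound of the form $C_R\|f-h\|_{H^k}^2$ since $\sum_n \lambda(n)^2|n|^{2k+2}<\infty$.

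The main obstacle is precisely the top-order term: because $f^{(k)}$ only lies in $L^2$ and not in $L^\infty$, one cannot simply copy the monomial estimate used for lower-order terms. The separation of the $f^{(k)}$-term from the rest of the Fa\`a di Bruno expansion, and the choice to keep $f^{(k)}$ (or $f^{(k)}-h^{(k)}$) in $L^2$ while placing the oscillatory factor $\hat{e}_n'(\tilde{f})-\hat{e}_n'(\tilde{h})$ in $L^\infty$, is what allows the argument to close; all other estimates are essentially direct adaptations of those in Proposition \ref{HS}.
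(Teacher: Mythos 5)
Your proposal is correct and follows essentially the same route as the paper's own proof: the same mean value theorem estimate for the $L^2$ part, the same Fa\`a di Bruno expansion with the top-order term $\hat{e}_n'(\tilde{f})f^{(k)}$ isolated, the same telescoping decomposition of differences of monomials into terms carrying either a factor $\hat{e}_n^{(j)}(\tilde{f})-\hat{e}_n^{(j)}(\tilde{h})$ or a factor $f^{(i)}-h^{(i)}$, and the same placement of the top derivative in $L^2$ against $L^\infty$ bounds on the oscillatory factors. The resulting bound $C_R\|f-h\|_{H^k}$ with $C_R$ controlled by $\sum_n\lambda(n)^2|n|^{2k+2}<\infty$ matches the paper's conclusion.
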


\begin{proof}
Let $R>0$, and $f,g\in C^k$ be such that $\|f\|_{H^k},\|g\|_{H^k}\le
R$. We have
\begin{align*}
&\|\Phi(f)-\Phi(g)\|_{L^2(H_\lambda,H^k)}^2 \\
&=\sum_{n\in\mathbb{Z}} \|[\Phi(f)-\Phi(g)]\hat{e}_n\|_{H^k}^2
=\sum_{n\in\mathbb{Z}} \|\hat{e}_n(id+f)-\hat{e}_n(id+g)\|_{H^k}^2\\
&=\sum_{n\in\mathbb{Z}} \|\hat{e}_n(id+f)-\hat{e}_n(id+g)\|_{L^2}^2 +
\|\hat{e}_n(id+f)^{(k)}-\hat{e}_n(id+g)^{(k)}\|_{L^2}^2,
\end{align*}
where $\hat{e}_n$ is defined in Definition (\ref{scaling}) and we have suppressed
the index $\lambda$ here.
$\hat{e}_n(id+f)$ and $\hat{e}_n(id+g)$ denote the function
$\hat{e}_n$ composed with $id+f$ and $id+g$ respectively.
$\hat{e}_n(id+f)^{(k)}$ and $\hat{e}_n(id+g)^{(k)}$ are the $k$th
derivatives of $\hat{e}_n(id+f)$ and $\hat{e}_n(id+g)$ respectively.

First, by the mean value theorem we have
\begin{align}
&\|\hat{e}_n(id+f)-\hat{e}_n(id+g)\|_{L^2}
=\|\hat{e}_n'(id+\xi)(f-g)\|_{L^2}\\
&\le \|\hat{e}_n'(id+\xi)\|_{L^\infty}\|f-g\|_{L^2}
\le \lambda(n) |n| \|f-g\|_{H^k}
\end{align}
We apply Fa\`{a} di Bruno's formula \eqref{BrunoFormula} to
$\hat{e}_n(id+f)^{(k)}$, and then expand it to a summation of
several terms. We are going to deal with the terms with and without
$f^{(k)}$, the highest derivative of $f$, separately.
So we write the summaion as
\begin{equation}\label{eq2.15}
\hat{e}_n(id+f)^{(k)}=...\textrm{ terms without } f^{(k)}... +
\hat{e}_n'(id+f)f^{(k)},
\end{equation}
where each term without $f^{(k)}$ has the form
\[
\hat{e}_n^{(j)}(id+f)m(f',f'',\cdots,f^{(k-1)})
\]
with $j\le k$ and $m(f',f'',\cdots,f^{(k-1)})$ a \emph{monomial} in
$f',f'',\cdots,f^{(k-1)}$.
Let $d$ be the degree of the
monomial $m(f',f'',\cdots,f^{(k-1)})$. Then from Fa\`{a} di Bruno's formula
we see that $d\le k$ for all monomials.
By replacing $f$ with
$g$ in \eqref{eq2.15}, we obtain
\begin{equation}
\hat{e}_n(id+g)^{(k)}=...\textrm{ terms without } g^{(k)}...
+ \hat{e}_n'(id+g)g^{(k)}
\end{equation}

Next, we need a simple observation: suppose $A_1A_2A_3...$ and
$B_1B_2B_3...$ are two monomials with the same number of factors. By
telescoping, we can put $A_1A_2A_3...-B_1B_2B_3...$ into the form
\[
(A_1-B_1)A_2A_3...+B_1(A_2-B_2)A_3...+B_1B_2(A_3-B_3)...+\cdots
\]
Using this observation, we can put $\hat{e}_n(id+f)^{(k)}-\hat{e}_n(id+g)^{(k)}$ into the form
\begin{align}
\hat{e}_n(id+f)^{(k)}&-\hat{e}_n(id+g)^{(k)}=
...\textrm{terms without } f^{(k)} \textrm{ and } g^{(k)}... \label{eq2.17} \\
&+\left(\hat{e}_n'(id+f)-\hat{e}_n'(id+g)\right)f^{(k)}+\hat{e}_n'(id+g)\left(f^{(k)}-g^{(k)}\right)
\notag
\end{align}
In expression \eqref{eq2.17},
there are two types of terms without $f^{(k)}$ and $g^{(k)}$.
One type has the form
\begin{equation}
\left(\hat{e}_n^{(j)}(id+f)-\hat{e}_n^{(j)}(id+g)\right)m_A(f',\cdots,f^{(k-1)},g',\cdots,g^{(k-1)}),
\end{equation}
where $j\le k$ and $m_A$ is a monomial in
$f',\cdots,f^{(k-1)},g',\cdots,g^{(k-1)}$. We denote such a term by
$A$. Another type has the form
\begin{equation}
\hat{e}_n^{(i)}(id+g)\left(f^{(j)}-g^{(j)}\right)m_B(f',\cdots,f^{(k-1)},g',\cdots,g^{(k-1)})
\end{equation}
where $i,j\le k$ and $m_B$ is a monomial in
$f',\cdots,f^{(k-1)},g',\cdots,g^{(k-1)}$. We denote such a term by
$B$.

Now we want to find an $L^2$ bound of each term in \eqref{eq2.17}.
For the term $A$, by the mean value theorem we have
\[
[\hat{e}_n^{(j)}(id+f)-\hat{e}_n^{(j)}(id+g)]
=\hat{e}_n^{(j+1)}(id+\xi)(f-g).
\]
By Definition \ref{scaling} of $\hat{e}_n$,
and using Item (\ref{Sobolev1}) and (\ref{Sobolev2})
in Theorem \ref{SobolevFacts}, we have
\begin{align}
\|A\|_{L^2}
&\le \|\hat{e}_n^{(j+1)}(id+\xi)\|_{L^\infty}
\|m_A\|_{L^\infty}
\|f-g\|_{L^2}\label{eq2.20} \\
&\le \lambda(n)|n|^{k+1}c_k^kN^k \|f-g\|_{H^k}. \notag
\end{align}
For the term $B$, we have
\begin{align}
\|B\|_{L^2}
&\le \|\hat{e}_n^{(i)}(id+g)\|_{L^\infty}
\|m_B\|_{L^\infty}
\|f^{(j)}-g^{(j)}\|_{L^2}\label{eq2.21} \\
&\le \lambda(n)|n|^{k}c_k^kN^k \|f-g\|_{H^k}. \notag
\end{align}

For the last two terms in expression \eqref{eq2.17},
using Item (\ref{Sobolev1}) and (\ref{Sobolev2})
in Theorem \ref{SobolevFacts} again, we have
\begin{align}
&\|[\hat{e}_n'(id+f)-\hat{e}_n'(id+g)]f^{(k)}\|_{L^2} \notag\\
&=\|\hat{e}_n''(id+\xi)(f-g)f^{(k)}\|_{L^2}
\le\|\hat{e}_n''(id+\xi)\|_{L^\infty}\|f-g\|_{L^\infty}\|f^{(k)}\|_{L^2} \label{eq2.22} \\
&\le\|\hat{e}_n''(id+\xi)\|_{L^\infty}c_k\|f-g\|_{H^k}\|f\|_{H^k}
\le \lambda(n) |n|^2 c_kN \|f-g\|_{H^k} \notag
\end{align}
and
\begin{equation}\label{eq2.23}
\|\hat{e}_n'(id+g)[f^{(k)}-g^{(k)}]\|_{L^2} \le \lambda(n) |n|
\|f-g\|_{H^k}.
\end{equation}

By \eqref{eq2.20}--\eqref{eq2.23}, we see that
$\lambda(n)|n|^{k+1}c_k^kN^k\|f-g\|_{H^k}$ is a common $L^2$ bound
for all terms in \eqref{eq2.17}. So,
\begin{equation}
\|\hat{e}_n(id+f)^{(k)}-\hat{e}_n(id+g)^{(k)}\|_{L^2}
\le K\lambda(n)|n|^{k+1}c_k^kN^k\|f-g\|_{H^k}
\end{equation}
where $K$ is the number of terms in expression \eqref{eq2.17},
which depends on $k$ but does not depend on $n$.

Finally,
\begin{align*}
&\|\Phi(f)-\Phi(g)\|_{L^2(H_\lambda,H^k)}^2\\
&\le \sum_{n\in\mathbb{Z}} \lambda(n)^2 |n|^2 \|f-g\|_{H^k}^2
+ K^2 \lambda(n)^2 |n|^{2k+2} c_k^{2k} R^{2k} \|f-g\|_{H^k}^2\\
&\le
Kc_k^kR^k\|f-g\|_{H^k}\left(\sum_{n\in\mathbb{Z}}\lambda(n)^2|n|^{2k+2}\right)^{1/2}
\end{align*}
Let
\[
C_R=\left(\sum_{n\in\mathbb{Z}} \lambda(n)^2 |n|^2+K^2 \lambda(n)^2 |n|^{2k+2}
c_k^{2k} R^{2k}\right)^{1/2},
\]
Because $\lambda(n)$ is rapidly decreasing (Definition \ref{scaling}),
$\sum_{n\in\mathbb{Z}}\lambda(n)^2 |n|^{2k}<\infty$.
So $C_R$ is a finite number that depends on $R$ and $k$.
Therefore,
\begin{equation}
\|\Phi(f)-\Phi(g)\|_{L^2(H_\lambda,H^k)}\le C_{R} \|f-g\|_{H^k}.
\end{equation}

\end{proof}

By the above proposition, $\Phi:C^k\to L^2(H_\lambda,H^k)$ is locally Lipschitz.
So $\Phi$ is uniformly continuous on $C^k$.
But $C^k$ is a dense subspace of $H^k$ (see subsection 2.3).
Therefore, we can extend the domain of $\Phi$ from $C^k$ to $H^k$, and
obtain a mapping $\Phi:H^k\to L^2(H_\lambda,H^k)$.
Similarly, we can also extend the domain of $\widetilde{\Phi}$ from
$\widetilde{C}^k$ to $\widetilde{H}^k$, and obtain a mapping
$\widetilde{\Phi}:\widetilde{H}^k\to L^2(H_\lambda,H^k)$.
After extension, $\Phi$ and $\widetilde{\Phi}$ are still locally Lipschitz.

\begin{definition}\label{defPhi}
Define
$\widetilde{\Phi}:\widetilde{H}^k\to L^2(H_\lambda,H^k)$
to be the extension of
$\widetilde{\Phi}:\widetilde{C}^k\to L^2(H_\lambda,H^k)$
from $\widetilde{C}^k$ to $\widetilde{H}^k$,
and
$\Phi:H^k\to L^2(H_\lambda,H^k)$
to be the extension of
$\Phi:C^k\to L^2(H_\lambda,H^k)$
from $C^k$ to $H^k$.
By the remark in the previous paragraph, $\Phi$ and $\widetilde{\Phi}$
are still locally Lipschitz.
\end{definition}

\section{The main result}\label{section3}

In this section, we fix a probability space $(\Omega, \mathcal{F}, \mathbb{P})$
equipped with a filtration $\mathcal{F}_\ast=\{\mathcal{F}_t, t\ge0\}$
that is right continuous and
such that each $\mathcal{F}_t$ is complete with respect to $\mathbb{P}$.

Equation \eqref{eq1.6} is now interpreted as a Stratonovich
stochastic differential equation on $\widetilde{H}^k$ for each $k=0,1,2,\cdots$.
Let us fix such a $k$.

\vspace{1pt}
\subsection{Changing Equation \eqref{eq1.6} into the It\^o form}

To solve Equation \eqref{eq1.6}, we first need to change it into the It\^o form.
Here we follow the treatment of S. Fang in \cite{Fang2002}.
In Definition \ref{Wt}, $W_t=\sum_{n\in\mathbb{Z}}
B_t^{(n)}\hat{e}_n^{(\alpha)}$, where $\alpha$ is a rapidly
decreasing function as described in Definition \ref{scaling}. Using
the definition of $\widetilde{\Phi}$, $W_t$, and $\hat{e}_n^{(\alpha)}$, we can
write Equation \eqref{eq1.6} as
\begin{equation}\label{eq3.1}
\delta\widetilde{X}_t=\alpha(0) +\sum_{n=1}^\infty
\alpha(n)\cos(n\widetilde{X}_t)\delta B_t^{(n)} +\sum_{m=1}^\infty
\alpha(m)\sin(m\widetilde{X}_t)\delta B_t^{(m)}.
\end{equation}
Using the stochastic contraction of $dB_t^{(n)}\cdot dB_t^{(m)}=\delta_{mn}dt$, we have
\begin{align*}
\alpha(n)d\cos(n\widetilde{X}_t)\cdot dB_t^{(n)}
&= -\alpha(n)^2\sin(n\widetilde{X}_t)\cos(n\widetilde{X}_t)dt\\
\alpha(n)d\sin(m\widetilde{X}_t)\cdot dB_t^{(m)}
&= \alpha(m)^2\sin(m\widetilde{X}_t)\cos(m\widetilde{X}_t)dt
\end{align*}
So the stochastic contraction of the right hand side of
\eqref{eq3.1} is zero. Therefore Equation \eqref{eq3.1} can be
written in the following It\^o form:
\begin{equation}\label{eq3.2}
d\widetilde{X}_t=\alpha(0)
+\sum_{n=1}^\infty \alpha(n)\cos(n\widetilde{X}_t)d B_t^{(n)}
+\sum_{m=1}^\infty \alpha(m)\sin(m\widetilde{X}_t)d B_t^{(m)}
\end{equation}
Using the definition of $W_t$ and $\widetilde{\Phi}$ again,
Equation \eqref{eq3.2} becomes
\begin{equation}
d\widetilde{X}_t=\widetilde{\Phi}(\widetilde{X}_t)dW_t
\end{equation}
Therefore, Equation \eqref{eq1.6} is equivalent to the following It\^o stochastic
differential equation
\begin{equation}\label{affineSDE}
d\widetilde{X}_t=\widetilde{\Phi}(\widetilde{X}_t)dW_t,
\hspace{.2in}
\widetilde{X}_0=id
\end{equation}
This equation is considered in the affine space $\widetilde{H}^k$.

If we write $\widetilde{X}_t=id+X_t$ with $X_t$
a process with values in the Sobolev space $H^k$
and use the definition of $\Phi$ (see subsection 2.4),
Equation \eqref{affineSDE} is equivalent to the following equation
\begin{equation}\label{fullSDE}
dX_t=\Phi(X_t)dW_t,
\hspace{.2in}
X_0=0
\end{equation}
This equation is considered in the Sobolev space $H^k$.

\vspace{1pt}
\subsection{Truncated stochastic differential equation}

By Proposition (\ref{LocalLip}) the operator $\Phi$ is locally Lipschitz.
To use G. DaPrato and J. Zabczyk's theory \cite{DaPrato}, we need to ``truncate''
the operator $\Phi$:
Let $R>0$.
Let $\Phi_R:H^k\to L^2(H_\alpha,H^k)$ be defined by
\begin{equation}
\Phi_R(x)=\left\{
\begin{array}{ll}
\Phi(x), & \|x\|_{H^k}\le R\\
\Phi(Rx/\|x\|_{H^k}), & \|x\|_{H^k}>R
\end{array}
\right.
\end{equation}
Then $\Phi_R$ is globally Lipschitz.
Let us consider the following ``truncated'' stochastic differential equation
\begin{equation}\label{truncatedSDE}
dX_t=\Phi_R(X_t)dW_t,
\hspace{.2in}
X_0=0
\end{equation}
in the Sobolev space $H^k$.
The following defintion is in accordance with G. DaPrato and J. Zabczyk's
treatments (p.182 in \cite{DaPrato}).

\begin{definition}
Let $T>0$.
An $\mathcal{F}_\ast$-adapted $H^k$-valued process $X_t$ with continuous sample paths
is said to be a mild solution to Equation \eqref{truncatedSDE} up to time $T$ if
\[
\int_0^T\|X_s\|_{H^k}^2 ds < \infty,
\hspace{.2in}
\mathbb{P}\textrm{-a.s.}
\]
and for all $t\in[0,T]$, we have
\[
X_t=X_0+\int_0^t \Phi_R(X_s)dW_s,
\hspace{.2in}
\mathbb{P}\textrm{-a.s.}
\]
For Equation \eqref{truncatedSDE}, a strong solution is the same as a mild solution.
The solution $X_t$ is said to be unique up to time $T$ if for any other
solution $Y_t$, the two processes $X_t$ and $Y_t$ are equivalent up to time $T$,
that is, the stopped processes $X_{t\wedge T}$ and $Y_{t\wedge T}$ are equivalent.
\end{definition}

\begin{remark}
In the above definition, we require a solution to have continuous sample paths.
\end{remark}

\begin{proposition}
For each $T>0$, there is a unique solution $X^{(T)}$ to Equation \eqref{truncatedSDE}
up to time $T$.
\end{proposition}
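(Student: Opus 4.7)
The plan is to invoke the classical existence-and-uniqueness theorem for mild solutions of stochastic differential equations in a separable Hilbert space from \cite{DaPrato}, applied with state space $H^k$, cylindrical noise $W_t$ on $H_\alpha$, zero drift, and diffusion coefficient $\Phi_R$. Three hypotheses must be verified: that $\Phi_R$ takes values in the Hilbert--Schmidt operators from $H_\alpha$ (equivalently from $H_\lambda$) into $H^k$, that it is globally Lipschitz on $H^k$, and that it satisfies a linear growth bound. Once these are in place, the cited theorem produces a unique $H^k$-valued mild solution $X^{(T)}$ with continuous sample paths on $[0,T]$, and since $\Phi_R$ is Lipschitz with values in Hilbert--Schmidt operators, this mild solution automatically agrees with the strong solution, as already noted in the paper's definition.

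First I would dispose of the Hilbert--Schmidt and growth properties. Proposition \ref{HS} supplies $\Phi(y)\in L^2(H_\lambda,H^k)$ for $y\in C^k$, and this extends by continuity to all of $H^k$ by Definition \ref{defPhi}. The truncation $\Phi_R$ only evaluates $\Phi$ at points of $H^k$-norm at most $R$, so the same membership holds pointwise; passing between $L^2(H_\lambda,H^k)$ and $L^2(H_\alpha,H^k)$ is automatic from the Hilbert--Schmidt inclusion $\iota:H_\alpha\hookrightarrow H_\lambda$ discussed after Definition \ref{scaling}. For linear growth it actually suffices to observe that $\Phi_R$ is \emph{bounded} on all of $H^k$: its image is contained in $\Phi(\{\|y\|_{H^k}\le R\})$, which is a bounded subset of $L^2(H_\lambda,H^k)$ by the continuity provided by Proposition \ref{LocalLip}.

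The central step is the global Lipschitz bound on $\Phi_R$, and this is where the only real (but still routine) work lies. Let $C_R$ be the local Lipschitz constant from Proposition \ref{LocalLip}. If $\|x\|_{H^k},\|y\|_{H^k}\le R$, the estimate is immediate. If both lie outside the ball of radius $R$, note that the radial retraction $r_R(x)=Rx/\|x\|_{H^k}$ is $2$-Lipschitz on $\{\|x\|_{H^k}\ge R\}$, so $\Phi_R(x)-\Phi_R(y)=\Phi(r_R(x))-\Phi(r_R(y))$ is controlled by $2C_R\|x-y\|_{H^k}$. In the mixed case, insert a boundary point $z$ on the segment from $x$ to $y$ with $\|z\|_{H^k}=R$ and combine the two preceding estimates. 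I do not anticipate any substantive obstacle; the content of the proposition is simply the correct verification of the hypotheses of the Hilbert-space SDE theory, and the real work of the paper comes in subsequent sections where the explosion time of the untruncated equation (recovered by letting $R\to\infty$) and the preservation of the diffeomorphism condition $\tilde{f}'>0$ must be controlled.
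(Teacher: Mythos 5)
Your proposal is correct and follows essentially the same route as the paper, which simply invokes Theorem 7.4 of \cite{DaPrato} after noting the linear growth bound and asserting that the remaining hypotheses (global Lipschitzness of $\Phi_R$, Hilbert--Schmidt values) are easily verified. You actually supply the details the paper leaves implicit, in particular the $2$-Lipschitz radial retraction argument for the global Lipschitz property of $\Phi_R$, but the underlying argument is identical.
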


\begin{proof}
The proof is a simple application of Theorem 7.4, p.186 from \cite{DaPrato}.
We need to check the conditions to use Theorem 7.4 from \cite{DaPrato}.
By definition of $\Phi_R$, we see that $\Phi_R$ satisfies the following growth condition:
\[
\|\Phi_R(x)\|_{L^2(H_\alpha,H^k)}^2 \le C (1+\|x\|_{H^k}^2),
\hspace{.2in}x\in H^k
\]
for some constant $C$.
All other conditions to use Theorem 7.4 from \cite{DaPrato} are easily verified.
Therefore, we have the conclusion.

\end{proof}

Let us choose a sequence $\{T_n\}_{n=1}^\infty$ such that $T_n \uparrow \infty$,
and let each $X^{(T_n)}$ be the unique solution to Equation \eqref{truncatedSDE}
up to time $T_n$.
By the uniqueness of the solution, and by the continuity of sample paths,
for $1\le i<j$, the sample paths of $X^{(T_j)}$ coincide with the sample paths
of $X^{(T_i)}$ up to time $T_i$ almost surely.
To be precise, we have, for almost all $\omega\in\Omega$,
\[
X^{(T_j)}(t,\omega)=X^{(T_i)}(t,\omega),
\hspace{.2in}
\textrm{for all } t\in[0,T_i]
\]
Therefore, we can extend the sample paths to obtain a process $X^R$:
For almost all $\omega\in\Omega$, let
\[
X^R(t,\omega)=\lim_{n\to\infty} X^{(T_n)}(t,\omega)
\hspace{.2in}
\textrm{for all } t\in[0,\infty)
\]
Then the process $X^R$ is a unique solution with continuous sample paths
to Equation \eqref{truncatedSDE} up to time $T$ for all $T>0$.

\begin{remark}
The above construction of the process $X^R$ is independent of the choice of the sequence
$\{T_n\}_{n=1}^\infty$:
Let $\{S_n\}_{n=1}^\infty$ be another sequence such that $S_n \uparrow \infty$.
Let $Y^R$ be the process contructed as above but using the sequence
$\{S_n\}_{n=1}^\infty$.
Then $X^R$ and $Y^R$ are equivalent up to $T$ for all $T>0$.
Therefore, they are equivalent.
\end{remark}

\begin{definition}\label{XR}
For every $R>0$, we define $X^R$ to be the $H^k$-valued process
with continuous sample paths as constructed above.
Define
\begin{equation}
\tau_R=\inf\{t: \|X^R(t)\|_{H^k}\ge R\}
\end{equation}
\end{definition}

\vspace{1pt}
\subsection{Solutions up to stopping times}

Let us consider Equation \eqref{fullSDE} in the Sobolev space $H^k$.
The following definition is in accordance with E. Hsu's treatments in \cite{Hsu}.

\begin{definition}\label{HsuDef1}
Let $\tau$ be an $\mathcal{F}_\ast$-stopping time.
An $\mathcal{F}_\ast$-adapted process $X_t$ with continuous sample paths is said to
be a solution to Equation \eqref{fullSDE} up to time $\tau$ if for all $t\ge0$
\[
X_{t\wedge\tau}=X_0+\int_0^{t\wedge\tau}\Phi(X_s)dW_s
\]
The solution $X_t$ is said to be unique up to $\tau$ if
for any other solution $Y_t$, the two processes $X_t$ and $Y_t$ are
equivalent up to $\tau$, that is, the stopped processes $X_{t\wedge\tau}$
and $Y_{t\wedge\tau}$ are equivalent.
\end{definition}

\begin{remark}
We can similarly define an $\widetilde{H}^k$-valued process being the unique solution
to Equation \eqref{affineSDE} up to a stopping time $\tau$.
Clearly, we have the following:
If $X_t$ is the solution to Equation \eqref{fullSDE} up to a stopping time $\tau$,
then the $\widetilde{H}^k$-valued process $\widetilde{X}_t=id+X_t$ is the solution
to Equation \eqref{affineSDE} up to time $\tau$ and vice versa.
\end{remark}

\begin{remark}
If $X_t$ is a solution to Equation \eqref{fullSDE} up to $\tau$,
then it is also a solution up to $\sigma$ for any $\mathcal{F}_\ast$-stopping time
$\sigma$ such that $\sigma\le\tau$ a.s.
\end{remark}

\begin{proposition}
Let $R>0$. Let $X^R$ and $\tau_R$ be defined as in Definition (\ref{XR}).
Then $X^R$ is the unique solution to Equation \eqref{fullSDE} up to $\tau_R$.
\end{proposition}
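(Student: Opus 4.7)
I would split the statement into two parts: (a) $X^R$ is a solution to Equation \eqref{fullSDE} up to $\tau_R$, and (b) this solution is unique up to $\tau_R$ in the sense of Definition \ref{HsuDef1}.

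For (a), the key observation is the pointwise identity $\Phi_R(x)=\Phi(x)$ whenever $\|x\|_{H^k}\le R$. By Definition \ref{XR} we have $\|X^R_s\|_{H^k}\le R$ for every $s\le\tau_R$ (with equality at $s=\tau_R$, by continuity of sample paths), so $\Phi_R(X^R_s)=\Phi(X^R_s)$ on $[0,\tau_R]$. Consequently
\[
\int_0^{t\wedge\tau_R}\Phi_R(X^R_s)\,dW_s=\int_0^{t\wedge\tau_R}\Phi(X^R_s)\,dW_s\qquad\mathbb{P}\textrm{-a.s.},
\]
and because $X^R$ solves the truncated equation \eqref{truncatedSDE}, the left-hand side equals $X^R_{t\wedge\tau_R}$. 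Together with $X_0=0$ this proves (a).

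For (b), let $Y$ be any solution to \eqref{fullSDE} up to $\tau_R$ and define the auxiliary stopping time $\sigma_Y:=\inf\{t\ge 0:\|Y_t\|_{H^k}\ge R\}$; set $\sigma:=\tau_R\wedge\sigma_Y$. On $[0,\sigma]$ we have both $\Phi(X^R_s)=\Phi_R(X^R_s)$ and $\Phi(Y_s)=\Phi_R(Y_s)$, so the stopped processes $X^R_{\cdot\wedge\sigma}$ and $Y_{\cdot\wedge\sigma}$ each solve the truncated equation driven by $\Phi_R$. Setting $D_t:=X^R_{t\wedge\sigma}-Y_{t\wedge\sigma}$ and using It\^o's isometry together with the global Lipschitz constant $C_R$ of $\Phi_R$, we obtain
\[
\mathbb{E}\,\|D_t\|_{H^k}^2\le C_R^2\int_0^t\mathbb{E}\,\|D_s\|_{H^k}^2\,ds,
\]
so Gr\"onwall's inequality forces $D_t=0$ for every $t$; continuity of sample paths then promotes this to $X^R_{t\wedge\sigma}=Y_{t\wedge\sigma}$ for all $t\ge 0$, $\mathbb{P}$-almost surely.

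It remains to upgrade $\sigma$ to $\tau_R$. On the event $\{\sigma<\tau_R\}$ one must have $\sigma=\sigma_Y<\infty$, hence $\|Y_\sigma\|_{H^k}\ge R$; but the equality $X^R_\sigma=Y_\sigma$ just established would then force $\|X^R_\sigma\|_{H^k}\ge R$, contradicting $\sigma<\tau_R$ together with the definition of $\tau_R$. Therefore $\sigma=\tau_R$ almost surely, which is exactly what is meant by saying $Y$ is equivalent to $X^R$ up to $\tau_R$. The main subtlety is the need to truncate \emph{both} candidate solutions before invoking Gr\"onwall, since $\Phi$ itself is only locally Lipschitz (Proposition \ref{LocalLip}); the auxiliary stopping time $\sigma_Y$ is precisely what makes this truncation legitimate.
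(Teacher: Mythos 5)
Your proof is correct, and the existence half (part (a)) is exactly the paper's argument: on $[0,\tau_R]$ one has $\Phi_R(X^R_s)=\Phi(X^R_s)$, so the stopped stochastic integrals coincide. Where you diverge is uniqueness. The paper disposes of it in one line: any solution $Y$ of \eqref{fullSDE} up to $\tau_R$ ``is also a solution to Equation \eqref{truncatedSDE} up to $\tau_R$,'' and then uniqueness for the truncated equation finishes the job. That step silently assumes $\|Y_s\|_{H^k}\le R$ on $[0,\tau_R]$ — but $\tau_R$ is defined by the exit of $X^R$ from the ball, not of $Y$, so a priori $Y$ could leave $B(0,R)$ earlier, and then $\Phi(Y_s)\ne\Phi_R(Y_s)$. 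Your introduction of the auxiliary stopping time $\sigma_Y$ and the localization $\sigma=\tau_R\wedge\sigma_Y$, followed by the It\^o-isometry/Gr\"onwall estimate on $[0,\sigma]$ and the continuity argument showing $\sigma=\tau_R$ a.s., is precisely the device needed to make that step airtight. So your route costs an extra Gr\"onwall computation (which the paper outsources to DaPrato--Zabczyk's uniqueness theorem for globally Lipschitz coefficients) but buys a genuinely complete treatment of the point where the paper is at its most terse; you could equally well keep the paper's appeal to uniqueness of the truncated equation and use your $\sigma_Y$ argument only to justify that $Y$ stays in the ball up to $\tau_R$.
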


\begin{proof}
Because $X^R$ is the unique solution to Equation \eqref{truncatedSDE} up to $T$
for all $T>0$, we have
\[
X^R_t=\int_0^t \Phi_R(X^R_s)dW_s
\]
for all $t\ge0$.
By the definition of $\Phi_R$, we have $\Phi_R(X^R_s)=\Phi(X^R_s)$ for $s\le\tau_R$.
So,
\[
X^R_{t\wedge\tau_R}=\int_0^{t\wedge\tau_R}\Phi_R(X^R_s)dW_s
=\int_0^{t\wedge\tau_R}\Phi(X^R_s)dW_s
\]
Therefore, $X^R$ is a solution to Equation \eqref{fullSDE} up to $\tau_R$.

Suppose $Y_t$ is another solution to Equation \eqref{fullSDE} up to $\tau_R$.
Then $Y_t$ is also a solution to Equation \eqref{truncatedSDE} up to $\tau_R$.
But $X^R_t$ is the unique solution to Equation \eqref{truncatedSDE} up to $T$ for all $T>0$.
Therefore, $Y_t$ and $X^R_t$ are equivalent up to $\tau_R$.

\end{proof}

Let us choose a sequence $\{R_n\}_{n=1}^\infty$ such that $R_n \uparrow \infty$,
and let $X^{R_n}$ and $\tau_{R_n}$ be defined as in Definition (\ref{XR}).
For $1\le i<j$, we have $\Phi_{R_i}(x)=\Phi_{R_j}(x)$ for $\|x\|_{H^k}\le R_i$.
Thus, $X^{R_j}$ is also a solution to Equation \eqref{fullSDE} up to $\tau_{R_i}$.
Therefore, by the uniqueness of solution and by the continuity of sample paths of solution,
the sample paths of $X^{R_j}$ coincide with the sample paths of $X^{R_i}$ almost surely.
To be precise, we have, for almost all $\omega\in\Omega$,
\[
X^{R_j}(t,\omega)=X^{R_i}(t,\omega),
\hspace{.2in}
\textrm{for all } t\in[0,\tau_{R_i}(\omega)]
\]
Consequently, $\{\tau_{R_n}\}_{n=1}^\infty$ is an increasing sequence of stopping times.
Let
\begin{equation}
\tau_\infty=\lim_{n\to\infty} \tau_{R_n}
\end{equation}
Now we can extend the sample paths of $X^{R_n}$ to obtain a process $X^\infty$:
For almost all $\omega\in\Omega$, let
\[
X^\infty(t,\omega)=\lim_{n\to\infty} X^{R_n}(t,\omega)
\hspace{.2in}
\textrm{for all } 0\le t<\tau_\infty(\omega)
\]
Then the process $X^\infty$ is a unique solution with continuous sample paths
to Equation \eqref{fullSDE} up to time $\tau_R$ for all $R>0$.
Also, the stopping time $\tau_R$ defined in Definition (\ref{XR}) is realized by
the process $X^\infty$:
\[
\tau_R=\inf\{t: \|X^\infty(t)\|_{H^k}\ge R\}
\]

\begin{remark}
The above constructions of
the process $X^\infty$ 
and
the stopping time $\tau_\infty$
are independent of the choice of the sequence $\{R_n\}_{n=1}^\infty$:
Let $\{S_n\}_{n=1}^\infty$ be another sequence such that $S_n \uparrow \infty$.
Let $\sigma_\infty$ be the stopping time and $Y^\infty$ be the process
contructed as above but using the sequence $\{S_n\}_{n=1}^\infty$.
First, we can combine the two sequences $\{R_n\}_{n=1}^\infty$ and $\{S_n\}_{n=1}^\infty$
to form a new sequence $\{K_n\}_{n=1}^\infty$ such that $K_n \uparrow \infty$.
Let $\gamma_\infty$ be the stopping time constructed as above but using the sequence
$\{K_n\}_{n=1}^\infty$.
Then $\tau_\infty=\sigma_\infty=\gamma_\infty$.
Also, $X^\infty$ and $Y^\infty$ are equivalent up to $\tau_{R_n}$ and $\tau_{S_n}$
for all $n=1,2,\cdots$. Therefore, they are equivalent up to $\tau_\infty$.
\end{remark}

\begin{definition}\label{XInf}
We define $X^\infty$ to be the $H^k$-valued process and $\tau_\infty$ to be the stopping time
as constructed above. We call $\tau_\infty$ the explosion time of the process $X^\infty$.
We also define the $\widetilde{H}^k$-valued process $\widetilde{X}^\infty$ to be
$\widetilde{X}^\infty=id+X^\infty$.
\end{definition}

We can slightly extend Definition (\ref{HsuDef1}) and make the following definition:

\begin{definition}\label{HsuDef2}
Let $\tau$ be an $\mathcal{F}_\ast$-stopping time.
An $\mathcal{F}_\ast$-adapted process $X_t$ with continuous sample paths is said to
be a solution to Equation \eqref{fullSDE} up to time $\tau$ if
there is an increasing sequence of $\mathcal{F}_\ast$-stopping time
$\{\tau_n\}_{n=1}^\infty$ such that $\tau_n \uparrow \tau$
and $X_t$ is a solution to Equation \eqref{fullSDE} up to time $\tau_n$ in the sense
of Definition (\ref{HsuDef1}) for all $n=1,2,\cdots$.
The solution $X_t$ is said to be unique up to $\tau$ if
it is unique up to $\tau_n$ for all $n=1,2,\cdots$.
\end{definition}

We have proved the following proposition:

\begin{proposition}\label{part1}
Let $k$ be a non-negative integer.
The process $X^\infty$ as defined in Definition (\ref{XInf}) is the unique solution with continuous
sample paths to Equation \eqref{fullSDE} up to the explosion time $\tau_\infty$.
\end{proposition}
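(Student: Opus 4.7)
The plan is to read off the conclusion directly from the construction preceding Definition \ref{XInf} together with the previous proposition, by verifying each of the two clauses of Definition \ref{HsuDef2} with the natural exhausting sequence of stopping times $\tau_n := \tau_{R_n}$. There is essentially no new analytic content to produce: everything has been arranged so that this final statement is a bookkeeping step.

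Concretely, I would fix the same sequence $\{R_n\}_{n=1}^\infty$ with $R_n \uparrow \infty$ used in Definition \ref{XInf}, so that by construction $\tau_{R_n} \uparrow \tau_\infty$ almost surely and, for a.e.\ $\omega$, $X^\infty(t,\omega)=X^{R_n}(t,\omega)$ for every $t\in[0,\tau_{R_n}(\omega)]$. Combined with the previous proposition, which says that $X^{R_n}$ is a solution to \eqref{fullSDE} up to $\tau_{R_n}$ in the sense of Definition \ref{HsuDef1}, this yields that $X^\infty$ is itself a solution to \eqref{fullSDE} up to $\tau_{R_n}$ in the sense of Definition \ref{HsuDef1} for every $n$. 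This is exactly the existence clause of Definition \ref{HsuDef2} applied to the stopping-time sequence $\tau_n=\tau_{R_n}$ and the limit $\tau_\infty$.

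For uniqueness in the sense of Definition \ref{HsuDef2} I must check that $X^\infty$ is unique up to $\tau_{R_n}$ for every $n$, and the previous proposition supplies this directly: any other continuous-sample-path solution $Y_t$ to \eqref{fullSDE} up to $\tau_{R_n}$ is equivalent to $X^{R_n}=X^\infty|_{[0,\tau_{R_n}]}$, so the stopped processes $Y_{t\wedge\tau_{R_n}}$ and $X^\infty_{t\wedge\tau_{R_n}}$ agree almost surely. Since the preceding proposition does the heavy lifting (DaPrato--Zabczyk existence/uniqueness for the truncated equation \eqref{truncatedSDE} together with the patching that produced $X^R$), the main ``obstacle'' here is only the mild but genuine subtlety that the localizing sequence used in Definition \ref{HsuDef2} should be intrinsic to $X^\infty$, not to the auxiliary processes $X^{R_n}$; this is precisely what is recorded in the sentence ``the stopping time $\tau_R$ defined in Definition (\ref{XR}) is realized by the process $X^\infty$'' just before Definition \ref{XInf}, so one simply invokes it.
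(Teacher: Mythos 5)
Your proposal is correct and follows essentially the same route as the paper, which in fact offers no separate proof at all but simply states ``We have proved the following proposition'' after the construction of $X^\infty$ and $\tau_\infty$; your verification of the two clauses of Definition~\ref{HsuDef2} with $\tau_n=\tau_{R_n}$, using the preceding proposition for both existence and uniqueness up to each $\tau_{R_n}$, is exactly the bookkeeping the paper leaves implicit. Your remark that the localizing sequence is realized intrinsically by $X^\infty$ matches the sentence to that effect just before Definition~\ref{XInf}.
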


\vspace{1pt}
\subsection{The main result}

In this subsection, we will prove that the explosion time $\tau_\infty$ defined in
Definition (\ref{XInf}) is infinity almost surely. We will also prove that
the process $\widetilde{X}^\infty$ defined in Definition (\ref{XInf})
lives in the group $\Diff(S^1)$.
The key idea to both proofs is the following proposition:

\begin{proposition}\label{Sasha}
Let $\widetilde{X}_t$ be an $\mathcal{F}_\ast$-adapted $\widetilde{H}^k$-valued process
with continuous sample paths and $\tau$ an $\mathcal{F}_\ast$-stopping time.
If $\widetilde{X}_t$ is a solution to
\[
d\widetilde{X}_t=\widetilde{\Phi}(\widetilde{X}_t)dW_t,
\hspace{.2in}
\widetilde{X}_0=id
\]
up to $\tau$, then $\widetilde{X}_t\circ\tilde{\xi}$ is a solution to
\[
d\widetilde{X}_t=\widetilde{\Phi}(\widetilde{X}_t)dW_t,
\hspace{.2in}
\widetilde{X}_0=\tilde{\xi}
\]
up to $\tau$, where $\tilde{\xi}$ is a bounded $\widetilde{H}^k$-valued random variable
and ``$\circ$'' is the composition of two functions.
\end{proposition}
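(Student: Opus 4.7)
The plan is to verify directly that $\widetilde{Y}_t := \widetilde{X}_t\circ\tilde{\xi}$ satisfies the integral equation
\[ \widetilde{Y}_{t\wedge\tau} = \tilde{\xi} + \int_0^{t\wedge\tau}\widetilde{\Phi}(\widetilde{Y}_s)\,dW_s \]
in the affine space $\widetilde{H}^k$. The strategy is to introduce the right-composition operator $R_{\tilde{\xi}}:h\mapsto h\circ\tilde{\xi}$, apply it to the integral equation already known to be satisfied by $\widetilde{X}_t$, and commute it through the It\^o integral.

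The key algebraic observation is the pointwise operator identity
\[ \widetilde{\Phi}(\tilde{f}\circ\tilde{\xi})(g) = g\circ\tilde{f}\circ\tilde{\xi} = R_{\tilde{\xi}}\bigl(\widetilde{\Phi}(\tilde{f})g\bigr), \qquad \tilde{f}\in\widetilde{H}^k,\ g\in H_\lambda, \]
so that the operator equality $\widetilde{\Phi}(\widetilde{X}_s\circ\tilde{\xi}) = R_{\tilde{\xi}}\circ\widetilde{\Phi}(\widetilde{X}_s)$ holds pathwise as maps $H_\lambda\to H^k$. Applying $R_{\tilde{\xi}}$ to both sides of
\[ \widetilde{X}_{t\wedge\tau} = id + \int_0^{t\wedge\tau}\widetilde{\Phi}(\widetilde{X}_s)\,dW_s \]
and noting $id\circ\tilde{\xi}=\tilde{\xi}$ gives
\[ \widetilde{Y}_{t\wedge\tau} = \tilde{\xi} + R_{\tilde{\xi}}\int_0^{t\wedge\tau}\widetilde{\Phi}(\widetilde{X}_s)\,dW_s. \]
Because $\tilde{\xi}$ is a bounded $\mathcal{F}_0$-measurable $\widetilde{H}^k$-valued random variable, $R_{\tilde{\xi}}$ defines a bounded $\mathcal{F}_0$-measurable linear operator on $H^k$, so it can be pulled inside the It\^o integral by a standard approximation with simple integrands, producing
\[ R_{\tilde{\xi}}\int_0^{t\wedge\tau}\widetilde{\Phi}(\widetilde{X}_s)\,dW_s = \int_0^{t\wedge\tau}R_{\tilde{\xi}}\circ\widetilde{\Phi}(\widetilde{X}_s)\,dW_s = \int_0^{t\wedge\tau}\widetilde{\Phi}(\widetilde{Y}_s)\,dW_s, \]
which is exactly what is needed.

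The main obstacle will be justifying the two technical ingredients just invoked: first, the boundedness of $R_{\tilde{\xi}}:H^k\to H^k$, which should follow from a Fa\`a di Bruno estimate entirely analogous to the one in the proof of Proposition \ref{HS} together with the $\widetilde{H}^k$-bound on $\tilde{\xi}$ (and which in particular ensures that $\widetilde{Y}_t$ really takes values in $\widetilde{H}^k$); and second, the commutation of an $\mathcal{F}_0$-measurable bounded linear operator with the Hilbert-space-valued It\^o integral, which is the operator-valued analogue of the scalar substitution rule and is proved, in the usual way, first for elementary processes and then by $L^2$-approximation within the DaPrato--Zabczyk framework.
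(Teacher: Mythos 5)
Your argument is essentially the paper's own: the paper likewise rewrites the integral equation as $\widetilde{X}_{t\wedge\tau}=id+\int_0^{t\wedge\tau}dW_s\circ\widetilde{X}_s$, composes both sides with $\tilde{\xi}$ on the right, pulls the composition inside the stochastic integral, and recognizes the result as $\tilde{\xi}+\int_0^{t\wedge\tau}\widetilde{\Phi}(\widetilde{X}_s\circ\tilde{\xi})\,dW_s$. The only difference is presentational: you package right-composition as an operator $R_{\tilde{\xi}}$ and explicitly flag the boundedness and integral-interchange issues that the paper's proof performs silently.
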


\begin{proof}
By assumption
\[
\widetilde{X}_{t\wedge\tau}=id+\int_0^{t\wedge\tau}\widetilde{\Phi}(\widetilde{X}_s)dW_s
\]
By definition of the operator $\widetilde{\Phi}$ (see subsection 2.4), this can be written as
\[
\widetilde{X}_{t\wedge\tau}=id+\int_0^{t\wedge\tau}dW_s\circ\widetilde{X}_s
\]
So
\[
\widetilde{X}_{t\wedge\tau}\circ\tilde{\xi}=\tilde{\xi}+\int_0^{t\wedge\tau}dW_s\circ\widetilde{X}_s\circ\tilde{\xi}
\]
that is
\[
\widetilde{X}_{t\wedge\tau}\circ\tilde{\xi}=\tilde{\xi}+\int_0^{t\wedge\tau}\widetilde{\Phi}(\widetilde{X}_s\circ\tilde{\xi})dW_s
\]
Therefore,
$\widetilde{X}_t\circ\tilde{\xi}$ is a solution to
\[
d\widetilde{X}_t=\widetilde{\Phi}(\widetilde{X}_t)dW_t,
\hspace{.2in}
\widetilde{X}_0=\tilde{\xi}
\]
up to $\tau$.

\end{proof}

\begin{remark}\label{concatenate}
(\textbf{Concatenating procedure}.)
Let $R>0$.
Let $\tilde{\xi}=\widetilde{X}^\infty(\tau_R)$.
Then $\tilde{\xi}$ is an $\widetilde{H}^k$-valued bounded random variable.
Let $W'_t=W_{t+\tau_R}-W_{\tau_R}$.
By the strong Markov property of the Brownian motion $W_t$,
we have $W'_t=W_t$ in distribution for all $t\ge0$.
Therefore, similar to the construction of $X^\infty$ and $\widetilde{X}^\infty$,
we can construct $Y^\infty$ and $\widetilde{Y}^\infty$ with $\widetilde{Y}^\infty$
a solution to the following equation
\[
d\widetilde{X}_t=\widetilde{\Phi}(\widetilde{X}_t)dW'_t,
\hspace{.2in}
\widetilde{X}_0=id
\]
up to stopping time
\[
\tau'_R=\inf\{t: \|Y^\infty(t)\|_{H^k}\ge R\}
\]
Using the strong Markov property of the Brownian motion $W_t$ again,
we see that $\tau_R=\tau'_R$ in distribution, and they are independent with each other.
By Proposition (\ref{Sasha}), $\widetilde{Y}^\infty\circ\tilde{\xi}$
is the solution up to time $\tau'_R$ to the following equation
\[
d\widetilde{X}_t=\widetilde{\Phi}(\widetilde{X}_t)dW'_t,
\hspace{.2in}
\widetilde{X}_0=\tilde{\xi}
\]
Because $\tilde{\xi}=\widetilde{X}^\infty(\tau_R)$,
we can \emph{concatenate} the two processes $\widetilde{X}^\infty$
and $\widetilde{Y}^\infty$ to form a new process $\widetilde{Z}^\infty$ as follows:
\begin{equation}
\widetilde{Z}^\infty_t=\left\{
\begin{array}{ll}
\widetilde{X}_t^\infty, & \textrm{ for } t\le\tau_R\\
\widetilde{Y}_{t-\tau_R}^\infty\circ\tilde{\xi}, & \textrm{ for } t>\tau_R
\end{array}
\right.
\end{equation}
By the choice of $W'_t$, we see that the process $\widetilde{Z}^\infty$ is a solution
to Equation \eqref{affineSDE} up to time $\tau_R+\tau'_R$.
By the uniqueness of solution, $\widetilde{Z}^\infty$ is equivalent to $\widetilde{X}^\infty$
up to time $\tau_R+\tau'_R$.

We can carry out this ``concatenating'' procedure over and over again.
Thus, for any $n\in\mathbb{N}$, we can construct a process $\widetilde{Z}^\infty$
which is a solutionn to Equation \eqref{affineSDE} and is equivalent to
$\widetilde{X}^\infty$ up to time $\tau_R+\tau'_R+\cdots+\tau^{(n)}_R$
with $\tau_R, \tau'_R, \cdots$ being identical in distribution and mutually independent
with each other.
\end{remark}

\begin{proposition}\label{part2}
Let $\tau_\infty$ be the explosion time of the process $X^\infty$
defined as in Definition (\ref{XInf}).
Then $\tau_\infty=\infty$ almost surely.
\end{proposition}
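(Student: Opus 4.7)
The plan is to leverage the concatenating procedure of Remark~\ref{concatenate} to show that $\widetilde{X}^\infty$ cannot blow up in finite time, by extending the solution past a sum of i.i.d.\ positive stopping times whose partial sums diverge.

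Fix $R>0$. Iterating the concatenation $n$ times produces a process $\widetilde{Z}^\infty$ with continuous $\widetilde{H}^k$-valued sample paths that is a solution to \eqref{affineSDE} up to $S_n:=\tau_R+\tau_R'+\cdots+\tau_R^{(n)}$, where the $\tau_R^{(i)}$ are i.i.d.\ copies of $\tau_R$. The first step is to argue $\tau_\infty\ge S_n$ a.s. Indeed, $\widetilde{Z}^\infty$ and $\widetilde{X}^\infty$ are both solutions to \eqref{affineSDE}, so by the uniqueness part of Proposition~\ref{part1} they agree on $[0,\tau_\infty\wedge S_n]$. If $\mathbb{P}(\tau_\infty<S_n)>0$, then on that event $\widetilde{Z}^\infty$ admits a well-defined (finite-norm) limit at $\tau_\infty$, whereas by definition of $\tau_\infty=\lim_m\tau_{R_m}$ we would have $\|X^\infty(t)\|_{H^k}\uparrow\infty$ as $t\uparrow\tau_\infty$, a contradiction.

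The second step is to show $S_n\to\infty$ a.s. Since $X^\infty$ has continuous paths and $X^\infty_0=0$, we have $\tau_R>0$ a.s., so one can pick $\varepsilon>0$ with $p:=\mathbb{P}(\tau_R>\varepsilon)>0$. By Remark~\ref{concatenate} the events $\{\tau_R^{(i)}>\varepsilon\}$ are independent, each of probability $p$, so the second Borel--Cantelli lemma gives that infinitely many of them occur almost surely, whence $S_n\to\infty$ a.s. Combining the two steps yields $\tau_\infty\ge\sup_n S_n=\infty$ a.s.

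I expect the main obstacle to be the first step, i.e.\ correctly transferring information about the concatenated process $\widetilde{Z}^\infty$ back to the explosion time of $\widetilde{X}^\infty$. The contradiction argument above hinges on the characterization of $\tau_\infty$ as the blow-up time (from the construction in Definition~\ref{XInf}) together with uniqueness up to stopping times; both ingredients are already available, so no further technical machinery is needed beyond carefully formulating Definition~\ref{HsuDef2} on the event where $\tau_\infty<S_n$.
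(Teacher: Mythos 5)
Your proof is correct and follows the same overall strategy as the paper: iterate the concatenating procedure of Remark~\ref{concatenate} to get a solution up to $S_n=\tau_R+\tau_R'+\cdots+\tau_R^{(n)}$, then let $n\to\infty$ using that the summands are i.i.d.\ and strictly positive. The one point where you genuinely diverge is the justification of $S_n\le\tau_\infty$. The paper obtains it from the chain $S_n\le\tau_{nR}\le\tau_\infty$, appealing to a ``triangle inequality in $H^k$'', i.e.\ the claim that each concatenated segment can raise the $H^k$-norm by at most $R$. You instead argue by contradiction: on the event $\{\tau_\infty<S_n\}$ the continuous path of the concatenated solution $\widetilde{Z}^\infty$ is bounded on $[0,\tau_\infty]$, while uniqueness forces $\widetilde{Z}^\infty=\widetilde{X}^\infty$ there and $\|X^\infty(\tau_{R_m})\|_{H^k}\ge R_m\to\infty$ with $\tau_{R_m}\uparrow\tau_\infty$. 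Your version avoids any quantitative norm bound on the concatenated process --- a real advantage, since the second segment is a composition $\widetilde{Y}^\infty\circ\tilde{\xi}$ whose $H^k$-norm is not simply additive, so the paper's ``triangle inequality'' is the more delicate claim --- at the cost of a slightly more careful invocation of uniqueness up to the stopping times $\tau_{R_m}\wedge S_n$, which is available from Proposition~\ref{part1}. Your Borel--Cantelli argument for $S_n\to\infty$ a.s.\ just makes explicit the step the paper states without proof; both are fine.
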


\begin{proof}
We can carry out the above ``concatenating'' procedure as many times as we want.
Thus, for any $n\in\mathbb{N}$, we can construct a process $\widetilde{Z}^\infty$
which is a solutionn to Equation \eqref{affineSDE} and is equivalent to
$\widetilde{X}^\infty$ up to time $\tau_R+\tau'_R+\cdots+\tau^{(n)}_R$.

By the triangle inequality in $H^k$, we have
\[
\tau_R+\tau'_R+\cdots+\tau^{(n)}_R \le \tau_{nR} \le \tau_\infty,
\]
On the other hand, because $\tau_R, \tau'_R, \cdots$ have the same distributions and
are mutually independent with each other,
\[
\lim_{n\to\infty} \tau_R+\tau'_R+\cdots+\tau^{(n)}_R =\infty
\hspace{.1in}
\textrm{a.s.}
\]
Therefore, the explosion time $\tau_\infty=\infty$ almost surely.

\end{proof}

\begin{proposition}\label{intersection}
Let $X^\infty$ be the $H^k$-valued process defined in Defintion (\ref{XInf}).
Then $X^\infty$ actually lives in the space $\diff(S^1)$.
\end{proposition}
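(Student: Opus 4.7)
The plan is to prove the result via a uniqueness-across-scales argument. Let me write $X^{\infty,k}$ for the process constructed in Definition \ref{XInf} in the Sobolev space $H^k$. By Proposition \ref{part2} the explosion time of $X^{\infty,k}$ is infinite almost surely, so $X^{\infty,k}$ is a globally defined $H^k$-valued process solving \eqref{fullSDE} in $H^k$. I want to show that for each fixed $k$, the process $X^{\infty,k}$ actually takes values in $H^j$ for every $j\ge k$, which combined with the trivial embedding $H^k\subseteq H^m$ for $m\le k$ yields $X^{\infty,k}\in\bigcap_{j\ge 0} H^j = \diff(S^1)$.

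The central observation is that the operator $\Phi$ is defined compatibly across the Sobolev scale. Concretely, for any $f\in C^\infty$, $\Phi(f)\hat e_n = \hat e_n\circ(id+f)$ is an element of $\diff(S^1)\subseteq H^j\subseteq H^k$ for all $j\ge k$, and both $\Phi:H^j\to L^2(H_\lambda,H^j)$ and $\Phi:H^k\to L^2(H_\lambda,H^k)$ are continuous extensions from this common dense subspace $C^\infty\subseteq C^j\subseteq C^k$. Consequently, if $f\in H^j$ then $\Phi(f)$, viewed in the weaker space, equals the inclusion of $\Phi(f)$ viewed in the stronger space. Fix now $j>k$. Since the inclusion $L^2(H_\lambda,H^j)\hookrightarrow L^2(H_\lambda,H^k)$ is continuous, it commutes with stochastic integration against $W_t$ (a fact contained in \cite{DaPrato}). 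Therefore $X^{\infty,j}$, a priori an $H^j$-valued process solving \eqref{fullSDE} in $H^j$, is simultaneously a continuous $H^k$-valued process satisfying the identity
\[
X^{\infty,j}_{t\wedge\sigma} = \int_0^{t\wedge\sigma} \Phi(X^{\infty,j}_s)\,dW_s \qquad \text{in } H^k,
\]
for any stopping time $\sigma$ below its (globally infinite) explosion time in $H^j$.

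Next I would compare $X^{\infty,k}$ and $X^{\infty,j}$ inside $H^k$. Let $\tau^{(k)}_R = \inf\{t:\|X^{\infty,k}_t\|_{H^k}\ge R\}$ and $\sigma^{(j)}_R = \inf\{t:\|X^{\infty,j}_t\|_{H^k}\ge R\}$. On $[0,\sigma^{(j)}_R]$, the process $X^{\infty,j}$ is an $H^k$-solution to the truncated equation \eqref{truncatedSDE} in $H^k$, so by uniqueness of the truncated equation (which underlies Proposition \ref{part1}) it coincides with $X^{R,k}$, and hence with $X^{\infty,k}$, on $[0,\sigma^{(j)}_R\wedge\tau^{(k)}_R]$. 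Letting $R\to\infty$ and using that both explosion times equal $\infty$ almost surely (Proposition \ref{part2}, applied at level $k$ and level $j$), I conclude $X^{\infty,k}_t = X^{\infty,j}_t$ for all $t\ge 0$ almost surely. In particular $X^{\infty,k}_t\in H^j$ almost surely, and varying $j$ over all integers $\ge k$ completes the proof.

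The step I expect to require the most care is verifying the compatibility of $\Phi$ across Sobolev levels, i.e.\ confirming that the two a priori distinct continuous extensions in Definition \ref{defPhi} really agree on their common domain and that this compatibility passes through the stochastic integral. Once this is in place, everything reduces to the uniqueness statement built into Proposition \ref{part1} together with the non-explosion statement of Proposition \ref{part2}.
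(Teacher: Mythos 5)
Your proposal is correct and follows essentially the same route as the paper: the paper's proof also observes that the $H^{k+1}$-valued (more generally $H^{j}$-valued, $j>k$) solution is simultaneously a solution of \eqref{fullSDE} in $H^k$ and invokes uniqueness to identify it with $X^{k,\infty}$, concluding that the process lives in $\bigcap_{j} H^{j}=\diff(S^1)$. You merely spell out two points the paper leaves implicit --- the compatibility of the extensions of $\Phi$ across the Sobolev scale and the localization by stopping times together with Proposition \ref{part2} to let $R\to\infty$ --- both of which check out.
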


\begin{proof}
The construction of $X^\infty$ in subsection 3.3 is for a fixed $k$.
But the method is valid for all $k=0,1,2,\cdots$.
Let us denote by $X^{k,\infty}$ the $H^k$-valued process as constructed in subsection 3.3.
Because Equation \eqref{fullSDE} takes the same form in each space $H^k$, $k=0,1,2,\cdots$,
also, $H^{k+1}\subseteq H^k$,
we see that the $H^{k+1}$-valued process $X^{k+1,\infty}$ is also a solution
to Equation \eqref{fullSDE} in the space $H^k$.
By uniqueness of the solution, $X^{k+1,\infty}$ is equivalent to $X^{k,\infty}$.
Therefore, we can also say the solution $X^{k,\infty}$
to Equation \eqref{fullSDE} in the space $H^k$
is also the solution to Equation \eqref{fullSDE} in the space $H^{k+1}$.
By induction, the solution $X^{k,\infty}$ actually lives in $H^{k+i}$ for all $i=0,1,2,\cdots$.
Therefore it lives in $\bigcap_{i=0}^\infty H^{k+i}=\diff(S^1)$.

\end{proof}

By the above proposition, the $\widetilde{H}^k$-valued process $\widetilde{X}^\infty$
lives in the affine space $\widetilde{\diff}(S^1)$.
In the next proposition we will prove that $\widetilde{X}^\infty$ actually lives
in the group $\Diff(S^1)$.
The key to the proof is Proposition (\ref{Difference})
together with the ``concatenating'' procedure (remark 3.13).

\begin{proposition}\label{part3}
The process $\widetilde{X}^\infty$ defined in Definition (\ref{XInf})
lives in the group $\Diff(S^1)$.
\end{proposition}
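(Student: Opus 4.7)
The plan is to reduce everything to Proposition \ref{Difference}, and then use a localization plus concatenation argument to propagate the diffeomorphism property all the way to $t=\infty$. By Proposition \ref{intersection} we already know $\widetilde{X}^\infty_t \in \widetilde{\diff}(S^1)$, so by Proposition \ref{Difference} it suffices to prove that $(X^\infty_t)'(\theta) > -1$ for all $\theta \in S^1$ and all $t \geq 0$, almost surely.

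The first step is a local bound using the Sobolev embedding. Fix $k \geq 2$ and view $X^\infty$ as an $H^k$-valued process. By item (\ref{Sobolev2}) of Theorem \ref{SobolevFacts}, there is a constant $c_k$ such that $\|f'\|_{L^\infty} \leq c_k \|f\|_{H^k}$ for every $f \in H^k$. Choose $R_0 < 1/c_k$. Then on the event $\{t \leq \tau_{R_0}\}$ we have $\|(X^\infty_t)'\|_{L^\infty} \leq c_k \|X^\infty_t\|_{H^k} \leq c_k R_0 < 1$, so $(X^\infty_t)' > -1$ pointwise, and hence $\widetilde{X}^\infty_t \in \Diff(S^1)$ for all $t \in [0, \tau_{R_0}]$. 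Note also that since $X^\infty_0 = 0$ and sample paths are continuous, $\tau_{R_0} > 0$ almost surely.

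The second step is to iterate via the concatenating procedure from Remark \ref{concatenate}. At time $\tau_{R_0}$ the random variable $\tilde{\xi} := \widetilde{X}^\infty(\tau_{R_0})$ is bounded in $\widetilde{H}^k$ and lies in $\Diff(S^1)$ by the previous step. Restart with $W'_t = W_{t+\tau_{R_0}} - W_{\tau_{R_0}}$, and construct $\widetilde{Y}^\infty$ together with its stopping time $\tau'_{R_0}$ exactly as in the construction of $\widetilde{X}^\infty$. The first step applies equally well to $\widetilde{Y}^\infty$, so $\widetilde{Y}^\infty_t \in \Diff(S^1)$ for $t \in [0, \tau'_{R_0}]$. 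By Proposition \ref{Sasha}, the process $\widetilde{Y}^\infty_t \circ \tilde{\xi}$ is the solution up to $\tau'_{R_0}$ starting from $\tilde{\xi}$, and since $\Diff(S^1)$ is closed under composition, $\widetilde{Y}^\infty_t \circ \tilde{\xi} \in \Diff(S^1)$ on this time interval. By the uniqueness of solutions, the concatenated process $\widetilde{Z}^\infty$ of Remark \ref{concatenate} is equivalent to $\widetilde{X}^\infty$ up to $\tau_{R_0} + \tau'_{R_0}$, so $\widetilde{X}^\infty_t \in \Diff(S^1)$ for all $t \in [0, \tau_{R_0} + \tau'_{R_0}]$.

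Finally, repeating this concatenation $n$ times produces i.i.d.\ copies $\tau_{R_0}, \tau'_{R_0}, \ldots, \tau^{(n)}_{R_0}$ of strictly positive stopping times, and $\widetilde{X}^\infty_t \in \Diff(S^1)$ for $t \leq \sum_{i=1}^n \tau^{(i)}_{R_0}$. Exactly as in the proof of Proposition \ref{part2}, the sum of i.i.d.\ strictly positive random variables tends to $\infty$ almost surely, which gives the conclusion $\widetilde{X}^\infty_t \in \Diff(S^1)$ for every $t \geq 0$, almost surely. The main obstacle in this plan is making the concatenation step clean: one has to be careful that $\tilde{\xi}$ satisfies the boundedness hypothesis of Proposition \ref{Sasha} (which is automatic since $\|X^\infty(\tau_{R_0})\|_{H^k} = R_0$), and that composition with $\tilde{\xi} \in \Diff(S^1)$ preserves membership in $\Diff(S^1)$ and is compatible with the stochastic integral, which is precisely the content of Proposition \ref{Sasha}.
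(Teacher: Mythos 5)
Your proposal is correct and follows essentially the same route as the paper: the Sobolev bound $\|f'\|_{L^\infty}\le c_k\|f\|_{H^k}$ to force $\tilde f'>0$ on the ball of radius $R<1/c_k$, Proposition \ref{Difference} combined with Proposition \ref{intersection} to get membership in $\Diff(S^1)$ up to $\tau_R$, and then the concatenating procedure of Remark \ref{concatenate} iterated with the i.i.d.\ positive stopping times to cover all of $[0,\infty)$. No substantive differences from the paper's own argument.
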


\begin{proof}
Let us fix a $k\ge 2$.
Suppose $\tilde{f}\in\widetilde{H}^k$.
By item (2) in Theorem 2.5, $\|f'\|_{L^\infty}\le c_k\|f\|_{H^k}$.
Thus, by controling the $H^k$-norm of $f$ we can control the $L^\infty$-norm of $f'$.
When $\|f'\|_{L^\infty}<1$, we have $f'>-1$, or equivalently, $\tilde{f}'>0$.
If we also know that $\tilde{f}$ is $C^\infty$, then by Proposition (\ref{Difference}),
we can conclude that $\tilde{f}$ is actually a diffeomorphism of $S^1$.

The process $X^\infty$ has values in the $R$-ball
\[
B(0,R)=\{x\in H^k : \|x\|_{H^k}\le R\}
\]
up to time $\tau_R$.
Let us choose $R$ so that $f\in B(0,R)$ implies $\|f'\|_{L^\infty}<1$.
Then up to $\tau_R$, the first derivative $\|X^\infty(t,\omega)^{(1)}\|_{L^\infty}<1$
almost surely.
So up to $\tau_R$, $X^\infty(t,\omega)^{(1)}>-1$, or equivalently
$\widetilde{X}^\infty(t,\omega)^{(1)}>0$ almost surely.
Also by Proposition (\ref{intersection}),
$\widetilde{X}^\infty$ lives in the affine space $\widetilde{\diff}(S^1)$,
which means: every element $\widetilde{X}^\infty(t,\omega)$ is $C^\infty$.
Therefore, by Proposition (\ref{Difference}),
$\widetilde{X}^\infty$ lives in the group $\Diff(S^1)$ up to time $\tau_R$.

In the ``concatenating'' procedure (remark 3.13), the process
$\widetilde{Y}^\infty$ lives in the group $\Diff(S^1)$ up to time $\tau'_R$
for the same reason.
Because $\xi=\widetilde{X}^\infty(\tau_R)$, it is now a $\Diff(S^1)$-valued random variable.
So we have $\widetilde{Y}^\infty\circ\tilde{\xi}$ lives in $\Diff(S^1)$ up to time $\tau'_R$.
By concatenation, the process $\widetilde{Z}^\infty$ lives in $\Diff(S^1)$
up to time $\tau_R+\tau'_R$.
Because $\widetilde{X}^\infty$ is equivalent to $\widetilde{Z}^\infty$
up to time $\tau_R+\tau'_R$,
we have the process $\widetilde{X}^\infty$ lives in $\Diff(S^1)$
up to time $\tau_R+\tau'_R$.
We can carry out this ``concatenating'' procedure over and over again.
Therefore, the process $\widetilde{X}^\infty$ lives in $\Diff(S^1)$
up to the explosion time $\tau_\infty$ which is infinity by Proposition (\ref{part2}).

\end{proof}

Putting together Propositions (\ref{part1}), (\ref{part2}) and (\ref{part3}),
we have proved the main result of the paper:

\begin{theorem}\label{MainTheorem}
There is a unique $\widetilde{H}^k$-valued solution with continuous sample paths
to Equation \eqref{affineSDE} for all $k=0,1,2,\cdots$.
Furthermore, the solution is non-explosive and lives in the group $\Diff(S^1)$.
\end{theorem}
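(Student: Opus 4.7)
The plan is to combine the DaPrato--Zabczyk machinery with the group-theoretic concatenation trick of Proposition \ref{Sasha}. For each fixed $k$, since $\Phi:H^k\to L^2(H_\lambda,H^k)$ is only locally Lipschitz (Propositions \ref{HS} and \ref{LocalLip}), I first introduce the radial truncation $\Phi_R(x)=\Phi(x)$ for $\|x\|_{H^k}\le R$ and $\Phi_R(x)=\Phi(Rx/\|x\|_{H^k})$ otherwise, which is globally Lipschitz with linear growth. Theorem 7.4 of \cite{DaPrato} then yields a unique continuous global mild solution $X^R$ to the truncated equation. Setting $\tau_R=\inf\{t:\|X^R(t)\|_{H^k}\ge R\}$ and sending $R_n\uparrow\infty$, uniqueness patches the $X^{R_n}$ into a single process $X^\infty$ living up to $\tau_\infty=\lim_n\tau_{R_n}$; passing to $\widetilde X^\infty=id+X^\infty$ gives the unique local solution of \eqref{affineSDE}.

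The core of the argument is showing $\tau_\infty=\infty$ almost surely, and Proposition \ref{Sasha} carries the weight. The Stratonovich form reads $\widetilde\Phi(\widetilde X_t)\,\delta W_t=\delta W_t\circ\widetilde X_t$, so right-composition with a bounded $\widetilde H^k$-valued initial condition $\tilde\xi$ commutes with the stochastic integral, upgrading a solution started at $id$ to one started at $\tilde\xi$. I would apply this with $\tilde\xi=\widetilde X^\infty(\tau_R)$ and the shifted Brownian motion $W'_t=W_{t+\tau_R}-W_{\tau_R}$: by the strong Markov property $W'$ is an independent copy of $W$, so the local construction produces an independent solution $\widetilde Y^\infty$ up to an independent stopping time $\tau'_R$ distributed as $\tau_R$. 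Uniqueness then licences concatenating $\widetilde X^\infty|_{[0,\tau_R]}$ with $(\widetilde Y^\infty\circ\tilde\xi)|_{[0,\tau'_R]}$, extending $\widetilde X^\infty$ to $[0,\tau_R+\tau'_R]$. Iterating $n$ times gives an extension up to $\sum_{i=1}^n\tau_R^{(i)}$ with i.i.d.\ strictly positive summands, and the strong law of large numbers forces this sum to diverge almost surely, so $\tau_\infty=\infty$.

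For the $\Diff(S^1)$ conclusion I would first invoke cross-$k$ consistency as in Proposition \ref{intersection}: the $H^{k+1}$-solution of \eqref{fullSDE} is also an $H^k$-solution, so by $H^k$-uniqueness it agrees with $X^{k,\infty}$, inductively placing $X^\infty$ in $\bigcap_k H^k=\diff(S^1)$ and hence $\widetilde X^\infty$ in $\widetilde{\diff}(S^1)$. By Proposition \ref{Difference} it then remains to show $(\widetilde X^\infty)'>0$ pointwise. Fix $k\ge 2$ and pick $R$ small enough that $\|f\|_{H^k}\le R$ forces $\|f'\|_{L^\infty}\le c_k R<1$ (Theorem \ref{SobolevFacts}(\ref{Sobolev2})); then on $[0,\tau_R]$ we have $1+(X^\infty)'>0$, so $\widetilde X^\infty\in\Diff(S^1)$ on that interval. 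In the concatenation step, $\tilde\xi\in\Diff(S^1)$ and $\widetilde Y^\infty\in\Diff(S^1)$ up to $\tau'_R$; since $\Diff(S^1)$ is closed under composition, $\widetilde Y^\infty\circ\tilde\xi$ remains diffeomorphism-valued throughout, and the iterated concatenation propagates the property to all of $[0,\tau_\infty)=[0,\infty)$.

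The main obstacle I expect is administrative rather than analytic: verifying that the concatenation at $\tau_R$ genuinely yields a solution of \eqref{affineSDE} on the full interval $[0,\tau_R+\tau'_R]$. This requires checking that $\widetilde X^\infty(\tau_R)$ is an $\mathcal F_{\tau_R}$-measurable bounded $\widetilde H^k$-valued random variable so Proposition \ref{Sasha} applies cleanly, that $W'$ is a Brownian motion independent of $\mathcal F_{\tau_R}$ with the same law as $W$, and that uniqueness on each sub-interval forces the glued process to coincide with the unique local solution obtained from the original truncation scheme. Once this bookkeeping is clean, the rest of the proof is essentially routine given the estimates of Section 2.
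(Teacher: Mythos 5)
Your proposal follows essentially the same route as the paper: truncation of the locally Lipschitz $\Phi$ to invoke DaPrato--Zabczyk, patching the truncated solutions up to the stopping times $\tau_R$, the concatenation argument via Proposition \ref{Sasha} and the strong Markov property to show $\tau_\infty=\infty$ (the paper uses the same i.i.d.\ sum divergence, together with the observation $\tau_R+\cdots+\tau_R^{(n)}\le\tau_{nR}\le\tau_\infty$), the cross-$k$ uniqueness argument for membership in $\diff(S^1)$, and the Sobolev bound $\|f'\|_{L^\infty}\le c_kR<1$ combined with Proposition \ref{Difference} and repeated concatenation for membership in $\Diff(S^1)$. The argument is correct and matches the paper's proof in all essential respects.
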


\bibliographystyle{amsplain}

\begin{thebibliography}{99}

\bibitem{Adams}
R. Adams, \emph{Sobolev spaces}, Academic Press, 1975.

\bibitem{AirMall2}
H. Airault, and P. Malliavin, \emph{Regularized Brownian motion on
the Siegel disk of infinite dimension}, Ukrain. Mat. Zh.,
\textbf{52} (2000), 1158--1165.

\bibitem{AirMall2006}
H. Airault, and P. Malliavin, \emph{Quasi-invariance of Brownian
measures on the group of circle homeomorphisms and
infinite-dimensional Riemannian geometry}, Journal of Functional
Analysis, \textbf{241} (2006), 99--142.

\bibitem{DaPrato} G. DaPrato and J. Zabczyk,
\emph{Stochastic Equations in Infinite Dimensions},
Cambridge University Press, Encyclopedia of mathematics and its applications, 1992.

\bibitem{Fang2002}  S. Fang,
\emph{Canonical {B}rownian motion on the diffeomorphism group of the circle},
Journal of Functional Analysis, \textbf{196} (2002), 162--179.

\bibitem{Gordina1}
M. Gordina, \emph{Holomorphic functions and the heat kernel measure
on an infinite-dimensional complex orthogonal group}, Potential
Anal., \textbf{12} (2000), 325--357.

\bibitem{Gordina2}
M. Gordina, \emph{Hilbert-Schmidt groups as infinite-dimensional Lie
groups and their Riemannian geometry}, Journal of Functional
Analysis, \textbf{227} (2005), 245--272.

\bibitem{Gordina3}
M. Gordina, \emph{Heat kernel analysis on infinite dimensional
groups}, Infinite dimensional harmonic analysis III, World
Scientific Publishing Co., (2005), 71--81.

\bibitem{Bruno}
W. Johnson, \emph{The curious history of Fa\`{a} di Bruno's
formula}, American Mathematical Monthly, \textbf{109} (2002),
217--234.

\bibitem{Wu}
M. Gordina, and M. Wu, \emph{Diffeomorphisms of the circle and
Brownian motions on an infinite dimensional symplectic group},
Commun. Stoch. Anal., \textbf{2} 2008, 71--95.

\bibitem{Hsu}
E. Hsu, \emph{Stochastic Analysis on Manifolds}, American
Mathematical Society, 2002.

\bibitem{Kunita}
H. Kunita, \emph{Stochastic differential equations and stochastic
flows of diffeomorphisms}, Lecture Notes in Mathematics (vol. 1097),
Springer, 1984.

\bibitem{Malliavin1999}  P. Malliavin,
\emph{The canonic diffusion above the diffeomorphism group of the circle},
C. R. Acad. Sci. Paris, v329, \textbf{4} 1999, 325--329.

\bibitem{Milnor}  J. Milnor,
\emph{Remarks on infinite-dimensional Lie groups},
Relativity, groups and topology, \textbf{II} (Les Houches, 1983),
1007--1057, North-Holland, Amsterdam, 1984.

\bibitem{Segal}
G. Segal, \emph{Unitary representations of Some Infinite Dimensional
Groups}, Commun. Math. Phys., \textbf{80} (1981), 301--341.

\bibitem{Ricci}
M. Wu, \emph{The lower bound of the Ricci curvature of the group
$\Sp(\infty)$}, preprint.

\end{thebibliography}

\end{document}